\documentclass[11pt,a4paper]{article}
\usepackage[T1]{fontenc}
\usepackage{lmodern}
\usepackage{amsmath,amssymb,amsthm,mathtools}
\usepackage{geometry}
\usepackage{enumitem,booktabs,array,microtype}
\usepackage{xcolor}
\usepackage[colorlinks=true,linkcolor=blue!45!black,citecolor=blue!45!black,urlcolor=blue!45!black]{hyperref}
\numberwithin{equation}{section}

\newtheorem{theorem}{Theorem}[section]
\newtheorem{proposition}[theorem]{Proposition}
\newtheorem{lemma}[theorem]{Lemma}
\newtheorem{corollary}[theorem]{Corollary}
\theoremstyle{definition}
\newtheorem{definition}[theorem]{Definition}

\theoremstyle{remark}

\DeclareMathOperator{\diver}{div}
\DeclareMathOperator{\Vol}{Vol}
\DeclareMathOperator{\tr}{tr}
\newcommand{\R}{\mathbb R}
\newcommand{\dd}{\,d}
\newcommand{\nbar}{\bar n}

\newcommand{\BH}{\mathrm{BH}}
\newcommand{\HT}{\mathrm{HT}}

\title{Hypersurface and Foliation Isoparametric Transfer\\for Generalized $(\alpha,\beta)$-Metrics}
\author{Yali Chen\\[0.55em]
\small School of Mathematics and Statistics, Anhui Normal University,\\
\small Wuhu, Anhui 241000, China}
\date{}

\begin{document}
\maketitle

\begin{abstract}
Let $F=\alpha\Phi(q,s)$ be a regular generalized $(\alpha,\beta)$-metric,
where $q=\|\beta\|_\alpha^2$ and $s=\beta/\alpha$, endowed with the
Busemann--Hausdorff or Holmes--Thompson volume.  We derive explicit comparison
formulas for the induced hypersurface volume and the associated volume mean
curvature and use them to study two transfer problems suggested by the Randers
case.  The first asks whether the same hypersurface is isoparametric for
$\alpha$ and $F$; the second asks whether the same regular foliation is
isoparametric for both metrics.  Exact local and global criteria are obtained.
Their Randers specializations recover the corresponding hypersurface transfer
and the condition $df(W)=\varphi(f)$.  We also prove an all-normal
induced-volume rigidity theorem and construct a complete non-Randers example
realizing both transfer mechanisms.
\end{abstract}

\noindent\textbf{Keywords.} generalized $(\alpha,\beta)$-metric; induced hypersurface volume; volume mean curvature; isoparametric hypersurface; transnormal function; Randers metric; Busemann--Hausdorff volume; Holmes--Thompson volume.

\section{Introduction}

The geometry of hypersurfaces in a Finsler manifold depends essentially on the
choice of volume.  Unlike the Riemannian case, several natural hypersurface
volumes and several notions of mean curvature occur.  In this paper the ambient
measure is the Busemann--Hausdorff (BH) or Holmes--Thompson (HT) volume, the
hypersurface measure is obtained by contraction with a chosen Finsler unit
normal, and the corresponding mean curvature is the one arising from the first
variation of this induced volume.  This point of view is standard in the study
of Finsler submanifolds and is particularly suitable for isoparametric
geometry; see, for example, \cite{HeShen2006,ChenHeQian2025}.

The class of $(\alpha,\beta)$-metrics provides one of the main sources of
explicit non-Riemannian Finsler structures.  Besides Randers metrics, it
contains many important examples whose geometric quantities can still be
computed in terms of a Riemannian metric $\alpha$ and a one-form $\beta$.
Projectively flat and curvature-related problems for ordinary
$(\alpha,\beta)$-metrics have been studied extensively; see, for instance,
\cite{Shen2009}.  A further extension is the class of general or generalized
$(\alpha,\beta)$-metrics introduced by Yu and Zhu,
\[
 F=\alpha\Phi(q,s),\qquad q=\|\beta\|_\alpha^2,\qquad
 s=\frac{\beta}{\alpha},
\]
where the additional $q$-dependence gives considerably more flexibility than
in the ordinary case; see \cite{YuZhu2011,YuZhu2015}.  Most work on this class
concerns intrinsic curvature and projective properties.  By comparison, the
extrinsic geometry of hypersurfaces in general $(\alpha,\beta)$-manifolds is
less developed, although even in the ordinary $(\alpha,\beta)$ case the
induced volume and mean curvature already contain genuinely Finslerian
correction terms.

Isoparametric hypersurfaces are a natural setting in which these correction
terms become visible.  Isoparametric functions and hypersurfaces have been
studied in Minkowski spaces and in several other Finsler settings; see
\cite{HeYinShen2016,HeHuangDong2022}.  The Randers case is especially
instructive.  Let $F$ be a Randers space form with navigation datum $(h,W)$.
He--Dong--Yin proved that $(N,F,d\mu_{\BH})$ and $(N,h)$ have the same
isoparametric hypersurfaces and hence the same classification list, although
their natural parallel families need not coincide \cite{HeDongYin2020}.  They
also proved that an $h$-isoparametric function $f$ is $F$-isoparametric if and
only if
\begin{equation}\label{eq:intro-randers-II}
 df(W)=\varphi(f)
\end{equation}
for a one-variable function $\varphi$.

These two Randers results lead to two different transfer problems.  In the
\emph{same-hypersurface transfer problem}, one fixes a cooriented hypersurface
$M\subset N$ and asks whether the same subset is isoparametric for both
metrics; the two normal families are allowed to differ.  In the
\emph{same-foliation transfer problem}, one fixes a regular foliation
$M_t=f^{-1}(t)$ and asks whether this same foliation is isoparametric for both
$\alpha$ and $F$.  The latter is therefore a statement about an entire family
of leaves rather than a single hypersurface.

Our aim is to extend these two Randers transfer mechanisms to regular
generalized $(\alpha,\beta)$-metrics.  Three difficulties appear.  First, the
simple Randers navigation identity between the two unit normals has no direct
analogue for a general profile $\Phi(q,s)$.  Second, in the same-hypersurface
problem the actual $F$-normal tube of $M$ need not agree with its
$\alpha$-normal tube.  Third, the mean curvature relevant for the BH or HT
Laplacian is the volume mean curvature associated with the induced
hypersurface measure, rather than only the trace of the anisotropic shape
operator.  We overcome these difficulties by working first with the dual norm
and the Legendre transformation.  This yields an explicit comparison of the
unit normals and the induced hypersurface volumes, and the first variation then
gives a pointwise comparison formula for the two volume mean curvatures.  The
same formula is the basic tool for both transfer problems.

The first main result concerns a prescribed hypersurface and its actual
$F$-normal family.  In global form it may be stated as follows; the notation in
the criterion is introduced in Sections~\ref{sec:prelim} and \ref{sec:mean}.

\begin{theorem}[Global same-hypersurface transfer]\label{thm:intro-global-I}
Let $M$ be a connected cooriented $\alpha$-isoparametric hypersurface.  Suppose
that $M=g^{-1}(t_0)$ is a connected regular level of a globally defined
$F$-transnormal function $g$, and write $M_t^F=g^{-1}(t)$.  Then $g$ is
$F$-isoparametric if and only if every connected regular level satisfies
\begin{equation}\label{eq:intro-global-I}
 d_{M_t^F}
 \left[
 \frac{\Lambda(q_t,r_t)H_{\alpha,t}-\mathcal Q_{M_t^F}}
 {\rho(q_t)}
 \right]=0.
\end{equation}
In particular, the condition is imposed on the actual $F$-transnormal leaves;
no equality with the $\alpha$-parallel family is assumed.
\end{theorem}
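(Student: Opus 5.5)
The plan is to reduce the global statement to the pointwise comparison formula for volume mean curvatures from Section~\ref{sec:mean}, applied leaf by leaf to the family $M_t^F=g^{-1}(t)$. Recall that for an $F$-transnormal function $g$, with $F^*(dg)^2=a(g)$, the $F$-Laplacian (BH or HT) of $g$ satisfies
\[
 \Delta_F g=-\sqrt{a(g)}\,H_{F,t}+\tfrac12 a'(g)
\]
along the level $M_t^F$. Here $H_{F,t}$ is the volume mean curvature of $M_t^F$ with respect to the $F$-unit normal $\nabla_F g/F(\nabla_F g)$. Hence $g$ is $F$-isoparametric, i.e.\ $\Delta_F g=b(g)$, if and only if $H_{F,t}$ is constant on each connected regular level. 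This is the standard first-variation identity for the induced measure, obtained by contracting the ambient volume with the unit normal; I take it as established in Section~\ref{sec:prelim}. Near critical levels no condition is imposed, since isoparametricity is required only on the regular set and the functions $a$ and $b$ extend by continuity.

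Next I invoke the comparison formula for a single cooriented hypersurface from Section~\ref{sec:mean}. I expect it to have the shape
\[
 \rho(q)\,H_F=\Lambda(q,r)\,H_\alpha-\mathcal Q_M,
\]
where $r$ is the normal datum (the $\alpha$-pairing of $\beta$ with the $\alpha$-unit normal), $\rho(q)>0$ is the volume-density ratio coming from the Legendre transformation, and $\mathcal Q_M$ collects the derivative terms of $\Phi$ along $M$. Applying this to $M_t^F$, with $H_{\alpha,t}$ its $\alpha$-mean curvature, gives
\[
 H_{F,t}=\frac{\Lambda(q_t,r_t)H_{\alpha,t}-\mathcal Q_{M_t^F}}{\rho(q_t)}.
\]
Constancy on a connected leaf is therefore equivalent to the vanishing of the tangential differential $d_{M_t^F}$ of this quotient, which is \eqref{eq:intro-global-I}. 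The coorientation must be the one induced by $dg$ on each leaf. On a connected leaf, a vanishing differential of a smooth function is equivalent to constancy, which gives both directions.

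The main obstacle is checking that the comparison formula applies to every leaf $M_t^F$ and not only to $M$. The hypothesis that $M$ is $\alpha$-isoparametric plays no role for the other leaves, and $H_{\alpha,t}$ need not be constant there; the formula must be purely pointwise for an arbitrary cooriented hypersurface. I would verify that the derivation in Section~\ref{sec:mean} uses only the induced volume of a single hypersurface and the Legendre correspondence between the $\alpha$-normal and the $F$-normal, which holds by regularity of $\Phi$. Once this is confirmed, the assumption on $M$ serves only to anchor the family, and the "in particular" remark follows because the argument never compares $M_t^F$ with the $\alpha$-parallel hypersurfaces of $M$.
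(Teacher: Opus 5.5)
Your proposal is correct and follows the paper's own proof. Transnormality of $g$ together with the transnormal--CMC criterion (Lemma~\ref{lem:transnormal-CMC}) reduces $F$-isoparametricity to constancy of $H_{F,t}$ on each connected regular leaf. Proposition~\ref{prop:general-H} holds pointwise for every cooriented hypersurface with no leafwise assumptions, so it turns this constancy into the stated tangential condition; as you note, the $\alpha$-isoparametricity of $M$ is never used.

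Two small descriptive slips do not affect the logic. First, $\rho$ is the BH/HT density ratio $dV_F/dV_\alpha$; the dual data enter only through $\psi$ in $\Lambda=\rho\psi$. Second, no continuous extension of $a,b$ past critical values is needed, since the definition only requires them on $g(N_g)$.
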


The second main result concerns one prescribed global foliation.

\begin{theorem}[Global same-foliation transfer]\label{thm:intro-global-II}
Let $f$ be a non-constant $C^1$ function on $N$, smooth on
$N_f=\{df\ne0\}$, whose connected regular levels form an
$\alpha$-isoparametric foliation.  Then the same function $f$ is
$F$-isoparametric if and only if, on every regular leaf $M_t=f^{-1}(t)$,
\begin{equation}\label{eq:intro-global-II}
 d_{M_t}\psi(q,r)=0,
 \qquad
 d_{M_t}\mathcal C_{M_t}=0.
\end{equation}
When these conditions hold, $\psi(q,r)=\psi(f)$ and
$\mathcal C_{M_t}=C(f)$ on the regular set, and
\begin{equation}\label{eq:intro-aF}
 F^*(df)=a_\alpha(f)\psi(f).
\end{equation}
\end{theorem}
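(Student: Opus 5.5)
We prove Theorem~\ref{thm:intro-global-II} by splitting $F$-isoparametricity into its two defining conditions, $F$-transnormality ($F^*(df)$ constant on leaves) and the condition that the BH/HT Laplacian $\Delta_F f$ be constant on leaves. Each condition will be compared with its $\alpha$-counterpart using the pointwise comparison formulas of Section~\ref{sec:mean}. Since $f$ is $\alpha$-isoparametric, we have $\|df\|_\alpha=a_\alpha(f)$ and $\Delta_\alpha f=b_\alpha(f)$ on $N_f$. Throughout, $q$ and $r$ are evaluated along the $\alpha$-unit normal $\nu_\alpha=\nabla_\alpha f/\|df\|_\alpha$.

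\emph{Step 1 (transnormality).} Positive homogeneity of the dual norm gives
\[
F^*(df)=\|df\|_\alpha\,F^*\!\left(\frac{df}{\|df\|_\alpha}\right).
\]
By the Legendre/dual-norm computation of Section~\ref{sec:prelim}, the second factor is the function $\psi(q,r)$. Hence $F^*(df)=a_\alpha(f)\psi(q,r)$. Since $a_\alpha(f)$ is constant and nonzero on each regular leaf, $F^*(df)$ is constant on leaves if and only if $d_{M_t}\psi(q,r)=0$. In that case $\psi(q,r)$ is a function of $t$ on each connected leaf. On the regular set we write it as $\psi(f)$, and this is \eqref{eq:intro-aF}. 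Smoothness of $\psi(f)$ in $t$ follows because $\psi(q,r)$ is smooth on $N_f$ and is constant along leaves.

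\emph{Step 2 (Laplacian).} Given transnormality, we use the standard identity for a transnormal function:
\[
\Delta_F f = F^*(df)^2\,\partial_{\nabla f}\text{-term} + F^*(df)\,H^F_{t},
\]
more precisely $\Delta_\mu f=\frac{d}{dt}$-derivative of $F^*(df)$ along the normal flow plus $F^*(df)$ times the volume mean curvature of $M_t$ with respect to the $F$-normal $\nabla_F f/F(\nabla_F f)$. The first term equals $\tfrac12 (F^{*2})'(f)$ and is therefore already constant on leaves. So $\Delta_F f$ is constant on leaves if and only if $H^F_{M_t}$ is constant on leaves. Next we apply the mean-curvature comparison formula of Section~\ref{sec:mean}, with the $F$-normal chosen co-oriented with $df$. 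It expresses $H^F_{M_t}$ as $\Lambda(q,r)H_{\alpha,t}$ minus a correction term, divided by $\rho(q)$. We use the corresponding foliation quantity $\mathcal C_{M_t}$, defined in Section~\ref{sec:mean}, which combines these terms with the factors $a_\alpha,\psi$ and is built so that $H^F_{M_t}$ is an affine function of $\mathcal C_{M_t}$ with coefficients depending only on $f$ once Step 1 holds. Because $H_{\alpha,t}$ is constant on $\alpha$-isoparametric leaves, $H^F$ is leafwise constant if and only if $d_{M_t}\mathcal C_{M_t}=0$. Constancy on each connected leaf then gives $\mathcal C_{M_t}=C(f)$.

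\emph{Step 3 (globality and the converse).} For the converse, we assume $F$-isoparametricity. Step 1 then forces $d_{M_t}\psi=0$, and Step 2 forces $d_{M_t}\mathcal C=0$, on every regular leaf. Since the statement is leafwise, no global issue arises on $N_f$. The only point that needs care is that $a_\alpha\ne0$ on $N_f$ (clear, since $df\ne0$ there), and that the $C^1$ regularity of $f$ across critical levels is irrelevant because both conditions concern only $N_f$.

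The main obstacle is Step 2. We must check that the $\alpha$-normal derivatives of $q$ and $r$, which appear when $\Delta_F f$ is written in terms of $\alpha$-data, are exactly absorbed into $\mathcal C_{M_t}$ and into terms that are functions of $f$. For this we would first verify the identity $\Delta_{F}f = \rho$-weighted divergence of $\nabla_F f$ using the volume density ratio $\sigma(q,\cdot)$ from the induced-volume comparison, and then expand $\operatorname{div}_\alpha(\sigma\,\nabla_F f)$ in an adapted $\alpha$-frame.
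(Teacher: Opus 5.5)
Your proposal follows the paper's proof. It derives transnormality from $F^*(df)=a_\alpha(f)\psi(q,r)$, reduces the Laplacian condition to leafwise constancy of $H_F$ via $\Delta_Ff=a_Fa_F'-a_FH_F$ (Lemma~\ref{lem:transnormal-CMC}), and then applies the reduced mean-curvature formula on each connected leaf. The step you flag as the main obstacle is exactly Proposition~\ref{prop:reduced-H}, $H_F=\psi H_\alpha-\bar n(\psi)-\mathcal C_{M_t}$ with $\mathcal C_{M_t}=\diver_{M_t}(\psi_rB^\top)+n(\log\rho)$; note that $\mathcal C_{M_t}$ carries no $a_\alpha$ factor, and the normal derivatives of $q,r$ are absorbed through $\mathcal Q_{M_t}=\rho\,\bar n(\psi)+\rho\,\mathcal C_{M_t}$. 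Since $\psi=\psi(f)$ gives $\bar n(\psi)=a_\alpha(f)\psi'(f)$, no further frame expansion is needed.
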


For Randers metrics the first theorem recovers the same-hypersurface mechanism
behind the classification theorem of He--Dong--Yin, while the second reduces
to \eqref{eq:intro-randers-II}.  We also show that an induced-volume identity
imposed in every normal direction is rigid and forces the Finsler norm to be
conformal to $\alpha$.  The final section gives a smooth forward and backward
complete generalized $(\alpha,\beta)$-metric which is non-Randers and
non-locally Minkowski and for which $f(x)=|x|^2$ realizes both transfer
mechanisms.

\section{Preliminaries}\label{sec:prelim}

\subsection{Generalized $(\alpha,\beta)$-metrics, duality, and BH/HT volumes}

Let $(N^n,\alpha)$ be an oriented Riemannian manifold, $n\ge2$, and let $\beta$
be a smooth one-form.  Write
\begin{equation}\label{eq:prelim-F}
 F(x,y)=\alpha(x,y)\Phi(q,s),\qquad
 q=\|\beta\|_\alpha^2,\qquad
 s=\frac{\beta(y)}{\alpha(y)},\qquad
 B=\beta^{\sharp_\alpha}.
\end{equation}
We assume throughout that $F$ is a smooth strongly convex Finsler metric on
$TN\setminus0$.  On the realized range $|s|\le\sqrt q$ it is convenient to put
\begin{equation}\label{eq:convexity}
 P=\Phi-s\Phi_s,
 \qquad
 D_0=\Phi-s\Phi_s+(q-s^2)\Phi_{ss},
\end{equation}
and to assume the usual regularity inequalities
\begin{equation}\label{eq:convexity-ineq}
 \Phi>0,\qquad P>0,\qquad D_0>0.
\end{equation}
No singular Kropina or general conic metric is included in the statements below.

For a Finsler metric $F$, the Busemann--Hausdorff density in local coordinates is
\begin{equation}\label{eq:BH-def}
 \sigma_{\BH}(x)=
 \frac{\Vol(B^n)}{\Vol\{y\in\R^n:F(x,y)<1\}},
 \qquad
 dV_{\BH}=\sigma_{\BH}(x)\,dx^1\wedge\cdots\wedge dx^n.
\end{equation}
The Holmes--Thompson density may be written as the normalized symplectic volume
of the unit co-ball, equivalently
\begin{equation}\label{eq:HT-def}
 \sigma_{\HT}(x)=
 \frac{1}{\Vol(B^n)}
 \int_{\{F(x,y)<1\}}\det(g_{ij}(x,y))\,dy^1\cdots dy^n,
\end{equation}
where $g_{ij}=\frac12(F^2)_{y^iy^j}$ is the fundamental tensor.

For the metric \eqref{eq:prelim-F}, both densities are multiples of the
Riemannian density.  We write uniformly
\begin{equation}\label{eq:rho-def}
 dV_F=\rho(q)dV_\alpha,
\end{equation}
where $dV_F$ denotes either BH or HT volume.  Put
\begin{equation*}
 I_n=\int_0^\pi\sin^{n-2}\theta\,d\theta,
 \qquad s=\sqrt q\cos\theta.
\end{equation*}
A polar integration in an $\alpha$-orthonormal frame gives
\begin{equation}\label{eq:rhoBH}
 \rho_{\BH}(q)=
 \frac{I_n}{\displaystyle
 \int_0^\pi \Phi(q,\sqrt q\cos\theta)^{-n}
 \sin^{n-2}\theta\,d\theta},
\end{equation}
and
\begin{equation}\label{eq:rhoHT}
 \rho_{\HT}(q)=
 \frac1{I_n}\int_0^\pi
 \Phi P^{n-2}D_0\sin^{n-2}\theta\,d\theta.
\end{equation}
Indeed,
\begin{equation}\label{eq:detg}
 \frac{\det(g_{ij})}{\det(a_{ij})}
 =\Phi^{n+1}P^{n-2}D_0,
\end{equation}
and radial integration over $F(y)<1$ contributes the factor
$\Phi^{-n}/n$.  In particular, for either choice of volume, $\rho$ depends only
on the base-point scalar $q$.  If $F=A(x)F_0$ with $A>0$, then
\begin{equation}\label{eq:rho-scale}
 \rho_{AF_0}=A^n\rho_{F_0}.
\end{equation}

The Legendre transformation is
\begin{equation}\label{eq:Legendre}
 \mathcal L_F(y)=g_y(y,\cdot)=F(y)d_yF,
 \qquad
 \mathcal L_F^{-1}(\xi)=F^*(\xi)d_\xi F^*(\xi).
\end{equation}
It is positively one-homogeneous but generally not odd.  We write the dual norm
as
\begin{equation}\label{eq:dual-profile}
 F^*(\xi)=\alpha^*(\xi)\psi(q,r),
 \qquad
 r=\frac{\xi(B)}{\alpha^*(\xi)}.
\end{equation}
The primal variable $s$ and the dual variable $r$ should not be identified
without the Legendre conversion derived in Section~\ref{sec:mean}.

\subsection{Finsler submanifolds, normal bundles, and mean curvature}

Let $\iota:M^m\to(N^n,F)$ be an immersion.  The conormal bundle is the vector
bundle
\begin{equation}\label{eq:conormal-bundle}
 N^*M=\{(x,\xi):x\in M,\ \xi(X)=0\ \text{for all }X\in T_xM\}.
\end{equation}
The Finsler normal bundle is defined by
\begin{equation}\label{eq:normal-bundle}
 NM=\{(x,n):x\in M,\ \mathcal L_F(n)(X)=0\ \text{for all }X\in T_xM\}.
\end{equation}
Unlike $N^*M$, the set $NM$ is generally not a vector bundle.  Away from the
zero section the Legendre map gives a diffeomorphism between the two normal
objects.  The unit normal bundle is
\begin{equation}\label{eq:unit-normal-bundle}
 N^0M=\{n\in NM:F(n)=1\}.
\end{equation}
For an embedded cooriented hypersurface we fix one of the two unit normal
sections and denote it by $n$.

For $n\in N^0M$, the pullback of the fundamental tensor $g_n$ gives a
Riemannian metric on $T_xM$.  This direction-dependent family
$(M,\iota^*g_n)$ is often called the anisotropic submanifold associated with
$n$; see \cite{HeDongYin2020}.  Let $D^n$ denote the Chern covariant derivative
with reference vector $n$.  The anisotropic shape operator is
\begin{equation}\label{eq:shape-operator}
 A_n(X)=-\bigl(D_X^n n\bigr)^{\top}_{g_n}.
\end{equation}
It is self-adjoint with respect to $\iota^*g_n$.  Its eigenvalues
$\lambda_1,\ldots,\lambda_m$ are the anisotropic principal curvatures and
\begin{equation}\label{eq:anisotropic-H}
 \widehat H_n=\tr A_n=\sum_{a=1}^m\lambda_a
\end{equation}
is the unnormalized anisotropic mean curvature.  These notions are important in
the Randers classification, where the shape operators can be compared directly.
They should, however, be distinguished from the volume mean curvature used in
this paper.

Fix a smooth ambient volume $dV_F$.  If $M$ is a cooriented hypersurface with
unit normal $n$, we define its contracted hypersurface volume by
\begin{equation}\label{eq:induced-volume-def}
 dV_n^F=(\iota_n dV_F)|_M.
\end{equation}
For a variation with vector field $X$, the corresponding volume mean curvature
$H_F$ is characterized by
\begin{equation}\label{eq:first-variation-def}
 \delta A_F(X)=-\int_M H_F\,\nu(X)\,dV_n^F,
 \qquad \nu=\mathcal L_F(n).
\end{equation}
Equivalently, for the chosen orientation,
\begin{equation}\label{eq:HF-div}
 H_F=-\diver_{dV_F}n.
\end{equation}
The Riemannian convention used throughout is similarly
$H_\alpha=-\diver_\alpha\bar n$, with no factor $1/(n-1)$.
The contracted measure \eqref{eq:induced-volume-def} need not coincide with the
intrinsic BH or HT measure of the restricted norm $F|_{TM}$; the first variation
in this paper always refers to \eqref{eq:induced-volume-def}.

\subsection{Transnormal and isoparametric functions}\label{subsec:isoparametric-prelim}

Let $f$ be a non-constant $C^1$ function on a Finsler manifold $(N,F,dV_F)$
and assume that $f$ is smooth on
\[
 N_f=\{x\in N:df(x)\ne0\}.
\]
Set $J=f(N_f)$.  Following the standard definition in Finsler isoparametric
geometry, $f$ is called \emph{$F$-isoparametric} if there are a smooth function
$a_F$ and a continuous function $b_F$ on $J$ such that
\begin{equation}\label{eq:F-isoparametric-definition}
 F^*(df)=a_F(f),
 \qquad
 \Delta_Ff=b_F(f)
\end{equation}
on $N_f$.  Equivalently, $F(\nabla^Ff)=a_F(f)$, where
\begin{equation}\label{eq:grad-lap}
 \nabla^Ff=\mathcal L_F^{-1}(df),
 \qquad
 \Delta_Ff=\diver_{dV_F}(\nabla^Ff).
\end{equation}
A function satisfying only the first equation in
\eqref{eq:F-isoparametric-definition} is called \emph{$F$-transnormal}.  The
analogous notions for $\alpha$ use $\alpha^*$ and $\Delta_\alpha$.

The regular level hypersurfaces $M_t=f^{-1}(t)$ form an isoparametric family.
An embedded hypersurface $M$ is called locally $F$-isoparametric if every point
of $M$ has a neighborhood in which $M$ is a regular level of an
$F$-isoparametric function.  We call an isoparametric function global when it
is defined on the whole ambient manifold in the above $C^1$ sense.  No
additional regularity or submanifold structure is required of the critical
level sets in the present paper.

If $f$ is $F$-transnormal, then on $N_f$
\begin{equation}\label{eq:unitnormal-f}
 n=\frac{\nabla^Ff}{a_F(f)},
 \qquad
 n(f)=a_F(f).
\end{equation}
Consequently,
\begin{align}
 \Delta_Ff
 &=\diver_{dV_F}\bigl(a_F(f)n\bigr)\notag\\
 &=a_F(f)a_F'(f)+a_F(f)\diver_{dV_F}n\notag\\
 &=a_F(f)a_F'(f)-a_F(f)H_F.
\label{eq:lap-H-relation}
\end{align}
Hence, on the regular set of an isoparametric function,
\begin{equation}\label{eq:H-from-lap}
 H_F=a_F'(f)-\frac{b_F(f)}{a_F(f)}.
\end{equation}
We shall repeatedly use the following standard consequence.

\begin{lemma}[Transnormal--CMC criterion]\label{lem:transnormal-CMC}
Let $f$ be an $F$-transnormal function with connected regular levels.  Then
$f$ is $F$-isoparametric if and only if every regular level has constant volume
mean curvature with respect to the normal selected by $df$.
\end{lemma}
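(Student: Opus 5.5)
The argument is short and rests entirely on the identity \eqref{eq:H-from-lap} together with the computation \eqref{eq:lap-H-relation} that produces it. Let $f$ be $F$-transnormal, so that $F^*(df)=a_F(f)$ on $N_f$ with $a_F$ smooth and positive on $J$. On $N_f$ the normal selected by $df$ is $n=\nabla^Ff/a_F(f)$, as in \eqref{eq:unitnormal-f}. The computation \eqref{eq:lap-H-relation} uses only transnormality, so on $N_f$ we have the pointwise identity
\begin{equation*}
 \Delta_Ff=a_F(f)a_F'(f)-a_F(f)H_F,
\end{equation*}
where $H_F=-\diver_{dV_F}n$ is the volume mean curvature of the level through the point. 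Both directions of the lemma will be read off from this identity.

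For the forward direction, assume $f$ is $F$-isoparametric, so that $\Delta_Ff=b_F(f)$. Solving the identity gives $H_F=a_F'(f)-b_F(f)/a_F(f)$, which is \eqref{eq:H-from-lap}. The right-hand side depends only on the value of $f$, so on each regular level $M_t$ the mean curvature $H_F$ equals the constant $a_F'(t)-b_F(t)/a_F(t)$. Connectedness of the levels is not even needed here.

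For the converse, assume every connected regular level $M_t$, $t\in J$, has constant $H_F=h(t)$ with respect to $n$. Then $\Delta_Ff=a_F(t)a_F'(t)-a_F(t)h(t)$ on $M_t$, so setting
\begin{equation*}
 b_F(t)=a_F(t)a_F'(t)-a_F(t)h(t)
\end{equation*}
gives $\Delta_Ff=b_F(f)$ on $N_f$. The hypothesis that the levels are connected is used exactly here: it makes $h$ a genuine single-valued function of $t$. If $f^{-1}(t)\cap N_f$ had several components, each could carry its own constant.

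The only step needing some care is that $b_F$ is continuous on $J$. To see this, pick $t\in J$ and a point $x\in N_f$ with $f(x)=t$. Take a short integral curve $\gamma$ of $\nabla^Ff$ through $x$. Along $\gamma$, $f$ is strictly increasing because $df(\nabla^Ff)=F^*(df)^2>0$, so $\gamma$ gives a local continuous inverse $t\mapsto\gamma(\tau(t))$. Then $b_F(t)=\Delta_Ff(\gamma(\tau(t)))$ near $t$, and this is continuous because $\Delta_Ff$ is continuous on $N_f$, where $f$ is smooth. The same argument shows $h$ is continuous, even smooth in $t$. This gives the second equation in \eqref{eq:F-isoparametric-definition}, so $f$ is $F$-isoparametric.
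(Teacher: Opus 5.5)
Your proof is correct and follows the paper's route: both directions are read off from the identity $\Delta_Ff=a_F(f)a_F'(f)-a_F(f)H_F$ of \eqref{eq:lap-H-relation}, which uses only transnormality. The integral-curve argument for the continuity of $b_F$ on $J$ and the remark on where connectedness enters are details the paper leaves implicit, and both are sound.
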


If $d_F$ is a local forward normal-distance coordinate to a cooriented
hypersurface, then
\begin{equation}\label{eq:distance-transnormal}
 F^*(dd_F)=1
\end{equation}
on its regular tube.  In a nonreversible metric this is a one-sided statement;
the opposite side is treated separately or by the reverse metric.

Isoparametricity is unchanged by increasing regular reparametrization.  Indeed,
if $\widetilde f=\chi(f)$ with $\chi'>0$, then
\begin{equation}\label{eq:reparam-grad}
 \nabla^F\widetilde f=\chi'(f)\nabla^Ff
\end{equation}
and
\begin{equation}\label{eq:reparam-lap}
 \Delta_F\widetilde f
 =\chi''(f)F^*(df)^2+\chi'(f)\Delta_Ff.
\end{equation}
Thus, in the second transfer problem, the essential object is the regular
foliation rather than a particular numerical parametrization of its leaves.

\section{Hypersurface volume and volume mean-curvature comparison}\label{sec:mean}

\subsection{Primal--dual conversion}

The first step is to express the dual profile $\psi$ in terms of the primal
profile $\Phi$.  Put
\begin{equation}\label{eq:E-and-R}
 E(q,s)=\sqrt{\Phi(q,s)^2+(q-s^2)\Phi_s(q,s)^2},
 \qquad
 R(q,s)=\frac{s\Phi+(q-s^2)\Phi_s}{E(q,s)}.
\end{equation}

\begin{lemma}[Primal--dual conversion]\label{lem:primal-dual}
For the generalized $(\alpha,\beta)$-metric \eqref{eq:prelim-F},
\begin{equation}\label{eq:psi-conversion}
 \psi(q,R(q,s))=E(q,s)^{-1}.
\end{equation}
Moreover,
\begin{equation}\label{eq:E-R-derivatives}
 E_s=\frac{\Phi_sD_0}{E},
 \qquad
 R_s=\frac{\Phi P D_0}{E^3}>0,
\end{equation}
and hence
\begin{equation}\label{eq:psi-r-formula}
 \boxed{\psi_r(q,R(q,s))=-\frac{\Phi_s}{\Phi P}.}
\end{equation}
For $q>0$, the map $s\mapsto R(q,s)$ maps
$[-\sqrt q,\sqrt q]$ monotonically onto itself.
\end{lemma}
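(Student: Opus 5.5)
The plan is to reduce everything to a two-dimensional computation in an $\alpha$-orthonormal frame adapted to $B$, and then compute the Legendre image of a unit vector explicitly. Fix $x$ with $q>0$. Choose $\alpha$-orthonormal coordinates with $B=\sqrt q\,e_1$. By rotational symmetry about $e_1$, it suffices to take $y=\cos\theta\, e_1+\sin\theta\, e_2$ with $\alpha(y)=1$, so that $s=\sqrt q\cos\theta$. Write $\alpha$ as the Euclidean norm and $\beta(y)=\langle B,y\rangle$. The first step is the differential
\[
d_yF=\Phi\,\frac{y^\flat}{\alpha}+\Phi_s\Bigl(\frac{B^\flat}{\alpha}-\frac{s\,y^\flat}{\alpha}\Bigr)=(\Phi-s\Phi_s)\,y^\flat+\Phi_s B^\flat\qquad(\alpha(y)=1).
\]
Since $d_yF$ is $0$-homogeneous in $y$, and $F^*(d_yF)=1$ for a strongly convex norm ($d_yF$ is the supporting covector at $y/F(y)$), this gives $F^*(\xi)=1$ for $\xi=P y^\flat+\Phi_sB^\flat$.

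Next I compute $\alpha^*(\xi)$ and $\xi(B)$. Using $\langle y,B\rangle=s$ and $|B|^2=q$,
\[
\alpha^*(\xi)^2=P^2+2sP\Phi_s+q\Phi_s^2=\Phi^2+(q-s^2)\Phi_s^2=E^2,
\]
after expanding $P=\Phi-s\Phi_s$. Similarly,
\[
\xi(B)=sP+q\Phi_s=s\Phi+(q-s^2)\Phi_s,
\]
so $r=\xi(B)/\alpha^*(\xi)=R(q,s)$. Then $1=F^*(\xi)=E\,\psi(q,R)$, which is \eqref{eq:psi-conversion}.

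The derivative formulas are direct differentiation. From $2EE_s=2\Phi\Phi_s-2s\Phi_s^2+2(q-s^2)\Phi_s\Phi_{ss}$ we get $E_s=\Phi_sD_0/E$. Setting $G=s\Phi+(q-s^2)\Phi_s$, one finds $G_s=\Phi-s\Phi_s+(q-s^2)\Phi_{ss}=D_0$. Hence
\[
R_s=\frac{D_0E^2-G\Phi_sD_0}{E^3}.
\]
The numerator simplifies to $E^2-G\Phi_s=\Phi^2-s\Phi\Phi_s=\Phi P$, which gives $R_s=\Phi PD_0/E^3>0$ by \eqref{eq:convexity-ineq}. Differentiating \eqref{eq:psi-conversion} in $s$ then gives
\[
\psi_r\,R_s=-\frac{E_s}{E^2}=-\frac{\Phi_sD_0}{E^3},
\]
and dividing by $R_s$ yields \eqref{eq:psi-r-formula}.

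For the range statement, note that at $s=\pm\sqrt q$ we have $E=\Phi$ and $R=\pm\sqrt q$. Since $R_s>0$, $R$ is strictly increasing and continuous, so it maps $[-\sqrt q,\sqrt q]$ bijectively onto itself. Consistently, $|R|\le\sqrt q$ by Cauchy--Schwarz, since $|\xi(B)|\le\alpha^*(\xi)\sqrt q$. Surjectivity onto the whole range of $r$ also shows that \eqref{eq:psi-conversion} determines $\psi$ everywhere.

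The main obstacle is justifying the step $F^*(d_yF)=1$ together with the symmetry reduction. This requires the regularity assumption \eqref{eq:convexity-ineq}, which guarantees strong convexity and hence that $d_yF$ is the unique supporting covector. It also requires checking that every covector direction is realized by some $\theta$, which is exactly the surjectivity of $R$ (together with the rotation about $B$). The algebraic simplifications $E^2-G\Phi_s=\Phi P$ and $G_s=D_0$ are routine but must be verified carefully. The degenerate case $q=0$ is trivial: there $\Phi$ is independent of $s$ and $\psi=1/\Phi$.
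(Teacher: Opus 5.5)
Your proposal is correct and follows essentially the same route as the paper: you compute $d_yF=P\,y^\flat/\alpha+\Phi_sB^\flat$, identify $\alpha^*(d_yF)=E$ and $d_yF(B)/\alpha^*(d_yF)=R$, use $F^*(d_yF)=1$ to get $\psi(q,R)=E^{-1}$, and then differentiate using $G_s=D_0$ and $E^2-G\Phi_s=\Phi P$. The only blemish is that your general formula for $d_yF$ should read $\Phi_s(B^\flat-s\,y^\flat/\alpha)$ rather than $\Phi_s(B^\flat/\alpha-s\,y^\flat/\alpha)$. This is harmless because you immediately set $\alpha(y)=1$; likewise, the two-dimensional frame reduction is not actually needed for the computation.
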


\begin{proof}
At a fixed base point let
\begin{equation*}
 \ell_i=\frac{a_{ij}y^j}{\alpha}.
\end{equation*}
Since
\begin{equation*}
 \frac{\partial s}{\partial y^i}
 =\frac{b_i-s\ell_i}{\alpha},
\end{equation*}
we obtain
\begin{align}
 F_{y^i}
 &=\frac{\partial}{\partial y^i}(\alpha\Phi)\notag\\
 &=\Phi\ell_i+\Phi_s(b_i-s\ell_i)\notag\\
 &=P\ell_i+\Phi_s b_i.
\label{eq:Fyi}
\end{align}
Let $p=d_yF$.  Using
$a^{ij}\ell_i\ell_j=1$, $a^{ij}\ell_i b_j=s$, and
$a^{ij}b_i b_j=q$, one finds
\begin{align}
 \alpha^*(p)^2
 &=a^{ij}(P\ell_i+\Phi_s b_i)(P\ell_j+\Phi_s b_j)\notag\\
 &=P^2+2sP\Phi_s+q\Phi_s^2\notag\\
 &=\Phi^2+(q-s^2)\Phi_s^2
 =E^2,
\label{eq:alpha-dual-p}
\end{align}
and
\begin{align}
 p(B)
 &=sP+q\Phi_s\notag\\
 &=s\Phi+(q-s^2)\Phi_s.
\label{eq:pB}
\end{align}
Therefore the dual angular variable of $p$ is exactly $R(q,s)$.  By positive
homogeneity,
\begin{equation*}
 F^*(d_yF)=1,
\end{equation*}
so
\begin{equation*}
 1=F^*(p)=\alpha^*(p)\psi(q,R)=E\psi(q,R),
\end{equation*}
which proves \eqref{eq:psi-conversion}.

Differentiate $E^2$ with respect to $s$:
\begin{align*}
 EE_s
 &=\Phi\Phi_s-s\Phi_s^2+(q-s^2)\Phi_s\Phi_{ss}\\
 &=\Phi_s\bigl[\Phi-s\Phi_s+(q-s^2)\Phi_{ss}\bigr]
 =\Phi_sD_0.
\end{align*}
Thus $E_s=\Phi_sD_0/E$.  If
$N=s\Phi+(q-s^2)\Phi_s$, then $N_s=D_0$ and
\begin{equation*}
 E^2-N\Phi_s=\Phi(\Phi-s\Phi_s)=\Phi P.
\end{equation*}
Hence
\begin{equation*}
 R_s=\frac{N_sE-NE_s}{E^2}
 =\frac{D_0(E^2-N\Phi_s)}{E^3}
 =\frac{\Phi P D_0}{E^3}>0.
\end{equation*}
At $s=\pm\sqrt q$ one has $R=\pm\sqrt q$, so the entire dual interval is
covered.  Finally, differentiating
$\psi(q,R(q,s))=E^{-1}$ and dividing by $R_s$ gives
\eqref{eq:psi-r-formula}.
\end{proof}

\subsection{Normal vector and induced hypersurface volume}

Let $M\subset N$ be a smooth cooriented hypersurface.  Denote by $n$ the chosen
$F$-unit normal and by
\begin{equation*}
 \nu=\mathcal L_F(n)
\end{equation*}
the corresponding unit conormal.  Let $\bar n$ be the $\alpha$-unit normal on
the same side and
\begin{equation*}
 \bar\nu=\mathcal L_\alpha(\bar n)=a(\bar n,\cdot).
\end{equation*}
Put
\begin{equation}\label{eq:r-Btop}
 r=\beta(\bar n)=a(B,\bar n),
 \qquad
 B^\top=B-r\bar n.
\end{equation}
Then $B^\top$ is tangent to $M$.

\begin{lemma}[Normal and induced-volume comparison]\label{lem:normal-volume}
For every cooriented hypersurface,
\begin{equation}\label{eq:conormal-comparison}
 \boxed{\nu=\frac{\bar\nu}{\psi(q,r)},}
\end{equation}
\begin{equation}\label{eq:normal-comparison}
 \boxed{n=\psi(q,r)\bar n+\psi_r(q,r)B^\top,}
\end{equation}
and
\begin{equation}\label{eq:induced-volume-comparison}
 \boxed{dV_n^F=\rho(q)\psi(q,r)dV_{\bar n}^\alpha.}
\end{equation}
If
\begin{equation}\label{eq:Lambda-def}
 \Lambda(q,r)=\rho(q)\psi(q,r),
\end{equation}
then $dV_n^F=\Lambda dV_{\bar n}^\alpha$.  Moreover, for every variation field
$X$ along $M$,
\begin{equation}\label{eq:flux-comparison}
 \nu(X)dV_n^F=\rho(q)\bar\nu(X)dV_{\bar n}^\alpha.
\end{equation}
\end{lemma}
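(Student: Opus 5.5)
The plan is to work at a fixed point $x\in M$ in the dual picture. First, the $F$-unit conormal is forced by the hypersurface alone. The annihilator of $T_xM$ is one-dimensional, and on the chosen side it is spanned by $\bar\nu$. The Finsler normal bundle \eqref{eq:normal-bundle} requires $\mathcal L_F(n)$ to annihilate $T_xM$, so $\nu=\lambda\bar\nu$ with $\lambda>0$. The normalization $F(n)=1$ is equivalent to $F^*(\nu)=1$, since the Legendre map preserves norms. Now $\alpha^*(\bar\nu)=1$ and $\bar\nu(B)=a(\bar n,B)=r$, so the dual angular variable of $\bar\nu$ is exactly $r$. Homogeneity of $F^*$ then gives $F^*(\bar\nu)=\psi(q,r)$, and hence $\lambda=1/\psi$. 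This proves \eqref{eq:conormal-comparison}.

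Next I recover $n$ from \eqref{eq:Legendre}, using $n=\mathcal L_F^{-1}(\nu)=F^*(\nu)\,d_\xi F^*(\nu)=d_\xi F^*(\nu)$. The derivative $d_\xi F^*$ is $0$-homogeneous, so it can be evaluated at $\bar\nu$ instead of at $\nu$. I then differentiate $F^*(\xi)=\alpha^*(\xi)\psi(q,r)$ in $\xi$, using $d_\xi\alpha^*=\xi^{\sharp}/\alpha^*$ and $d_\xi r=(B-r\,\xi^\sharp/\alpha^*)/\alpha^*$. At $\xi=\bar\nu$ this gives
\[
n=\psi\,\bar n+\psi_r\,(B-r\bar n)=\psi\,\bar n+\psi_r B^\top ,
\]
which is \eqref{eq:normal-comparison}. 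This computation is the mirror image of \eqref{eq:Fyi} in Lemma~\ref{lem:primal-dual}. I would note in passing that $B^\top$ is tangent because $a(B^\top,\bar n)=r-r=0$.

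For the volume, I contract $dV_F=\rho(q)dV_\alpha$ with $n$ and restrict to $M$. The tangential part $\psi_rB^\top$ contributes nothing, since $\iota_{B^\top}dV_\alpha$ restricted to $M$ vanishes. The reason is that $B^\top$ together with any $n-1$ tangent vectors gives $n$ vectors in the $(n-1)$-dimensional space $T_xM$, so they are linearly dependent. Hence $dV_n^F=\rho\,\psi\,(\iota_{\bar n}dV_\alpha)|_M=\Lambda\, dV^\alpha_{\bar n}$, which is \eqref{eq:induced-volume-comparison}. Finally, \eqref{eq:conormal-comparison} gives $\nu(X)=\bar\nu(X)/\psi$. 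Multiplying by $dV_n^F=\rho\psi\, dV^\alpha_{\bar n}$, the factor $\psi$ cancels and \eqref{eq:flux-comparison} follows.

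The main point needing care is the first step: checking that the side and orientation conventions match. The positive multiple $\lambda$ must correspond to the same side, which holds because $\nu(n)=F(n)^2>0$ and $\bar\nu(n)=\psi>0$ agree in sign. One must also confirm that $\psi$ is evaluated on the realized range $|r|\le\sqrt q$, where Lemma~\ref{lem:primal-dual} guarantees it is defined. Everything after that is a direct differentiation.
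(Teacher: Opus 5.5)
Your proposal is correct and follows the paper's proof essentially step for step: you normalize the conormal proportionality constant via $F^*(\bar\nu)=\psi(q,r)$, recover $n$ as the zero-homogeneous differential $d_\xi F^*$ evaluated at $\bar\nu$, and drop the tangential $\psi_rB^\top$ term in the contraction. One small fix: for the sign of $\lambda$, use only that $n$ and $\bar n$ lie on the same side, i.e.\ $\bar\nu(n)>0$, rather than $\bar\nu(n)=\psi$, which presupposes the formula you are proving.
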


\begin{proof}
The conormals $\nu$ and $\bar\nu$ annihilate the same tangent hyperplane and
have the same coorientation, so $\nu=c\bar\nu$ for some $c>0$.  Since
$F^*(\nu)=1$,
\begin{equation*}
 1=cF^*(\bar\nu)
 =c\alpha^*(\bar\nu)\psi(q,r)
 =c\psi(q,r),
\end{equation*}
which proves \eqref{eq:conormal-comparison}.

For a positively one-homogeneous norm,
\begin{equation*}
 \mathcal L_F^{-1}(\xi)=F^*(\xi)d_\xi F^*(\xi).
\end{equation*}
At the unit conormal $\nu$, zero-homogeneity of $dF^*$ gives
\begin{equation*}
 n=d_\xi F^*(\bar\nu).
\end{equation*}
At $\bar\nu$, $d_\xi\alpha^*=\bar n$.  Since
$r(\xi)=\xi(B)/\alpha^*(\xi)$,
\begin{equation*}
 d_\xi r(\bar\nu)=B-r\bar n=B^\top.
\end{equation*}
Differentiating \eqref{eq:dual-profile} therefore gives
\eqref{eq:normal-comparison}.

Let $e_1,\ldots,e_{n-1}$ be a positively oriented basis of $T_xM$.  Using
$dV_F=\rho dV_\alpha$ and \eqref{eq:normal-comparison},
\begin{align*}
 dV_n^F(e_1,\ldots,e_{n-1})
 &=\rho dV_\alpha(\psi\bar n+\psi_rB^\top,e_1,\ldots,e_{n-1})\\
 &=\rho\psi\,dV_\alpha(\bar n,e_1,\ldots,e_{n-1}),
\end{align*}
because the term containing $B^\top$ vanishes.  This proves
\eqref{eq:induced-volume-comparison}.  Finally,
\eqref{eq:conormal-comparison} and \eqref{eq:induced-volume-comparison} give
\begin{equation*}
 \nu(X)dV_n^F
 =\frac{\bar\nu(X)}{\psi}\rho\psi dV_{\bar n}^\alpha
 =\rho\bar\nu(X)dV_{\bar n}^\alpha.
\end{equation*}
\end{proof}

\subsection{The general volume mean-curvature formula}

We now derive the main first-variation formula.  Define
\begin{equation}\label{eq:mean-curvatures}
 H_\alpha=-\diver_\alpha\bar n,
 \qquad
 H_F=-\diver_{dV_F}n.
\end{equation}

\begin{proposition}[General volume mean-curvature comparison]\label{prop:general-H}
For every smooth cooriented hypersurface $M$,
\begin{equation}\label{eq:general-H}
 \boxed{\rho H_F=\Lambda H_\alpha-\mathcal Q_M,}
\end{equation}
where
\begin{equation}\label{eq:general-Q}
 \boxed{
 \mathcal Q_M=
 \Lambda_q\bar n(q)
 +\Lambda_r(\nabla^\alpha_{\bar n}\beta)(\bar n)
 +\diver_M(\Lambda_rB^\top).
 }
\end{equation}
No leafwise constancy assumption on $q$, $r$, or the divergence term is used.
\end{proposition}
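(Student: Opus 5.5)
We compute $H_F=-\diver_{dV_F}n$ directly from the normal comparison of Lemma~\ref{lem:normal-volume}, working near a point of $M$. Extend $\bar n$ as the $\alpha$-unit normal field of a local foliation by parallel hypersurfaces, and extend $n$ by the same formula \eqref{eq:normal-comparison} on nearby leaves, so that both are smooth vector fields. On $M$ the divergences do not depend on the chosen extension to first order in the normal direction, because the formula is the first variation; this is what we check at the end. Since $dV_F=\rho(q)dV_\alpha$, the standard identity $\diver_{\rho dV_\alpha}Y=\rho^{-1}\diver_\alpha(\rho Y)$ gives
\begin{equation*}
 \rho H_F=-\diver_\alpha(\rho n)
 =-\diver_\alpha\bigl(\Lambda\bar n\bigr)-\diver_\alpha\bigl(\rho\psi_r B^\top\bigr).
\end{equation*}
Here $\rho\psi=\Lambda$ and $\rho\psi_r=\Lambda_r$, because $\rho$ depends only on $q$.

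\emph{The normal term.} We have $\diver_\alpha(\Lambda\bar n)=\Lambda\diver_\alpha\bar n+\bar n(\Lambda)=-\Lambda H_\alpha+\Lambda_q\bar n(q)+\Lambda_r\bar n(r)$. Next, $r=\beta(\bar n)$ gives $\bar n(r)=(\nabla^\alpha_{\bar n}\beta)(\bar n)+\beta(\nabla^\alpha_{\bar n}\bar n)$. For the $\alpha$-unit normal field of a parallel (geodesic) foliation, $\nabla^\alpha_{\bar n}\bar n=0$, so the last term drops. If another extension is used, this term is tangent and must instead be absorbed by the tangential-divergence term below. To avoid that bookkeeping I will use the geodesic extension.

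\emph{The tangential term.} Write $Y=\Lambda_rB^\top$, which is tangent to $M$ and to every leaf of the extension. For a vector field tangent to the leaves, $\diver_\alpha Y=\diver_M Y+a(\nabla^\alpha_{\bar n}Y,\bar n)$. Since $a(Y,\bar n)\equiv0$, the last term equals $-a(Y,\nabla^\alpha_{\bar n}\bar n)$, which vanishes for the geodesic extension. Hence $\diver_\alpha(\Lambda_rB^\top)=\diver_M(\Lambda_rB^\top)$ on $M$.

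Collecting the two computations gives $\rho H_F=\Lambda H_\alpha-\Lambda_q\bar n(q)-\Lambda_r(\nabla^\alpha_{\bar n}\beta)(\bar n)-\diver_M(\Lambda_rB^\top)$, which is \eqref{eq:general-H}. The main obstacle is justifying that $-\diver_{dV_F}n$ evaluated on $M$ with this particular extension is the volume mean curvature defined by \eqref{eq:first-variation-def}. One might worry that the $F$-unit normal fields of the parallel $\alpha$-leaves are not the natural $F$-extension. The way to settle this is to derive $H_F$ from the first variation itself. By \eqref{eq:flux-comparison} and Lemma~\ref{lem:normal-volume}, $A_F=\int_M\Lambda\,dV^\alpha_{\bar n}$. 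Varying with $X=\phi\bar n$ gives $\delta A_F=\int_M\bigl(\bar n(\Lambda)-\Lambda H_\alpha\bigr)\phi\,dV^\alpha_{\bar n}$, with the $r$-variation computed as above, plus a tangential contribution. Varying along tangential $X$ gives zero, but a normal variation rotates $\bar n$ by $-\nabla_M\phi$, and this changes $r$ by $-B^\top(\phi)$. Integrating that term by parts produces $+\diver_M(\Lambda_rB^\top)\phi$. Comparing with $-\int_M H_F\,\nu(X)\,dV_n^F=-\int_M\rho H_F\,\phi\,dV^\alpha_{\bar n}$ then yields the same formula and confirms the sign conventions. I would present this first-variation derivation as the actual proof, since it is extension-independent, and use the divergence computation only as a cross-check.
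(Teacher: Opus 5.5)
Your proposal is correct, and the version you say you would present is exactly the paper's proof: the normal variation $X=\phi\bar n$, the rotation $-\nabla^M\phi$ of $\bar n$ producing the extra term $-\langle B^\top,\nabla^M\phi\rangle_\alpha$ in $\dot r$, integration by parts of the $\Lambda_rB^\top$ term for compactly supported $\phi$, and comparison through the flux identity \eqref{eq:flux-comparison}. Your geodesic-extension divergence cross-check is essentially the paper's argument for Proposition~\ref{prop:reduced-H}, and your remark that for a non-geodesic extension the two $\langle B^\top,\nabla^\alpha_{\bar n}\bar n\rangle_\alpha$ contributions cancel is also correct.
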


\begin{proof}
Take an $\alpha$-normal variation field
\begin{equation*}
 X=\eta\bar n,
\end{equation*}
compactly supported if $M$ is noncompact.  Since $q$ is a scalar function on
$N$,
\begin{equation}\label{eq:qdot}
 \dot q=dq(X)=\eta\bar n(q).
\end{equation}
For
$r=\beta(\bar n)$ the covariant product rule gives
\begin{equation}\label{eq:rdot-start}
 \dot r=(\nabla_X^\alpha\beta)(\bar n)
 +\beta\left(\frac{D^\alpha\bar n}{d\varepsilon}\right).
\end{equation}
We compute the variation of the Riemannian unit normal.  Let
$e_a(\varepsilon)$ be tangent fields along the variation.  Differentiating
$\langle\bar n_\varepsilon,e_a(\varepsilon)\rangle_\alpha=0$ and using the
commutation of the variation parameters gives
\begin{equation*}
 \left\langle\frac{D^\alpha\bar n}{d\varepsilon},e_a\right\rangle_\alpha
 =-e_a(\eta).
\end{equation*}
Differentiating $|\bar n_\varepsilon|_\alpha^2=1$ shows that the normal component
vanishes.  Hence
\begin{equation}\label{eq:normal-variation}
 \frac{D^\alpha\bar n}{d\varepsilon}=-\nabla^M\eta.
\end{equation}
Substitution in \eqref{eq:rdot-start} gives
\begin{equation}\label{eq:rdot}
 \dot r
 =\eta(\nabla^\alpha_{\bar n}\beta)(\bar n)
 -\langle B^\top,\nabla^M\eta\rangle_\alpha.
\end{equation}
Therefore
\begin{align}
 \dot\Lambda
 &=\Lambda_q\dot q+\Lambda_r\dot r\notag\\
 &=\Bigl[\Lambda_q\bar n(q)
 +\Lambda_r(\nabla^\alpha_{\bar n}\beta)(\bar n)\Bigr]\eta
 -\Lambda_r\langle B^\top,\nabla^M\eta\rangle_\alpha.
\label{eq:Lambdadot}
\end{align}
The Riemannian hypersurface measure satisfies
\begin{equation}\label{eq:Riemann-area-variation}
 \delta dV_{\bar n}^\alpha=-H_\alpha\eta dV_{\bar n}^\alpha.
\end{equation}
Since $dV_n^F=\Lambda dV_{\bar n}^\alpha$,
\begin{align}
 \delta A_F
 &=\int_M(\dot\Lambda-\Lambda H_\alpha\eta)dV_{\bar n}^\alpha\notag\\
 &=\int_M
 \Bigl[\Lambda_q\bar n(q)+\Lambda_r(\nabla^\alpha_{\bar n}\beta)(\bar n)
 -\Lambda H_\alpha\Bigr]\eta\,dV_{\bar n}^\alpha\notag\\
 &\quad-
 \int_M\Lambda_r\langle B^\top,\nabla^M\eta\rangle_\alpha
 dV_{\bar n}^\alpha.
\label{eq:first-var-intermediate}
\end{align}
Put $Y=\Lambda_rB^\top$.  The tangential product rule
\begin{equation*}
 \diver_M(\eta Y)=\langle\nabla^M\eta,Y\rangle_\alpha
 +\eta\diver_MY
\end{equation*}
and the divergence theorem yield
\begin{equation*}
 -\int_M\langle Y,\nabla^M\eta\rangle_\alpha dV_{\bar n}^\alpha
 =\int_M\eta\diver_MY\,dV_{\bar n}^\alpha.
\end{equation*}
Thus
\begin{equation}\label{eq:first-var-Q}
 \delta A_F
 =\int_M(\mathcal Q_M-\Lambda H_\alpha)\eta
 dV_{\bar n}^\alpha.
\end{equation}
On the other hand, by \eqref{eq:first-variation-def} and
\eqref{eq:flux-comparison},
\begin{equation}\label{eq:first-var-HF}
 \delta A_F
 =-\int_M\rho H_F\eta\,dV_{\bar n}^\alpha.
\end{equation}
Comparison of \eqref{eq:first-var-Q} and \eqref{eq:first-var-HF}, together with
the arbitrariness of $\eta$, proves \eqref{eq:general-H}.
\end{proof}

The same identity admits a more economical form.  Extend $\bar n$ locally by
the $\alpha$-geodesic normal foliation, so that
$\nabla^\alpha_{\bar n}\bar n=0$, and use this extension when differentiating
$r=\beta(\bar n)$ and $\psi(q,r)$ in the normal direction.

\begin{proposition}[Reduced volume mean-curvature formula]\label{prop:reduced-H}
Define
\begin{equation}\label{eq:C-def}
 \boxed{\mathcal C_M=\diver_M(\psi_rB^\top)+n(\log\rho).}
\end{equation}
Then
\begin{equation}\label{eq:reduced-H}
 \boxed{H_F=\psi H_\alpha-\bar n(\psi)-\mathcal C_M.}
\end{equation}
\end{proposition}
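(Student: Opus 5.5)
The plan is to derive \eqref{eq:reduced-H} algebraically from Proposition~\ref{prop:general-H}, dividing by $\rho$ and regrouping. This avoids deciding how $n$ is to be extended off $M$ when computing $\diver_{dV_F}n$, since \eqref{eq:general-H} was obtained from the first variation and involves only quantities on $M$ plus normal derivatives of $q$ and $\beta$. The extension of $\bar n$ by the $\alpha$-geodesic normal foliation serves only to give meaning to $\bar n(\psi)$ and $\bar n(r)$.

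\textbf{Step 1: expand the normal-derivative terms of $\mathcal Q_M$.} With $\nabla^\alpha_{\bar n}\bar n=0$ we have
\[
\bar n(r)=\bar n\bigl(\beta(\bar n)\bigr)=(\nabla^\alpha_{\bar n}\beta)(\bar n)+\beta(\nabla^\alpha_{\bar n}\bar n)=(\nabla^\alpha_{\bar n}\beta)(\bar n).
\]
Hence the first two terms of \eqref{eq:general-Q} combine into $\Lambda_q\bar n(q)+\Lambda_r\bar n(r)=\bar n(\Lambda)$. Since $\Lambda=\rho\psi$ and $\rho$ depends only on $q$, this equals
\[
\psi\,\bar n(\rho)+\rho\,\bar n(\psi).
\]

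\textbf{Step 2: split the tangential divergence.} Since $\Lambda_r=\rho\psi_r$, the product rule for $\diver_M$ gives
\[
\diver_M(\Lambda_rB^\top)=\rho\,\diver_M(\psi_rB^\top)+\psi_r B^\top(\rho).
\]

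\textbf{Step 3: recombine the $\rho$-derivatives.} By \eqref{eq:normal-comparison},
\[
\rho\, n(\log\rho)=n(\rho)=\psi\,\bar n(\rho)+\psi_r B^\top(\rho),
\]
where $B^\top(\rho)$ is a tangential derivative and so makes sense on $M$. Therefore
\[
\mathcal Q_M=\rho\,\bar n(\psi)+\rho\bigl[\diver_M(\psi_rB^\top)+n(\log\rho)\bigr]=\rho\,\bar n(\psi)+\rho\,\mathcal C_M.
\]
Substituting this into $\rho H_F=\rho\psi H_\alpha-\mathcal Q_M$ and dividing by $\rho>0$ yields \eqref{eq:reduced-H}.

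As a cross-check, one can compute directly from $H_F=-\diver_{\rho dV_\alpha}n=-\diver_\alpha n-n(\log\rho)$ with $n$ extended by $\psi\bar n+\psi_rB^\top$ along the parallel foliation. One uses $\diver_\alpha(\psi\bar n)=\bar n(\psi)-\psi H_\alpha$. For $Y$ tangent to the leaves, $\diver_\alpha Y=\diver_MY-\langle Y,\nabla^\alpha_{\bar n}\bar n\rangle=\diver_MY$, because $\langle Y,\bar n\rangle=0$ and $\nabla^\alpha_{\bar n}\bar n=0$.

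The only delicate point is Step 1. The identity $\bar n(r)=(\nabla^\alpha_{\bar n}\beta)(\bar n)$ genuinely requires the geodesic extension; with any other extension an extra term $\beta(\nabla^\alpha_{\bar n}\bar n)$ would appear. This is exactly why the statement fixes that extension. Everything else is bookkeeping of the product rule.
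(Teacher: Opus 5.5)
Your argument is correct, but your main route is not the paper's. The paper's proof is essentially what you call the cross-check: it starts from $H_F=-\diver_{dV_F}n$, writes $\diver_{dV_F}Y=\diver_\alpha Y+Y(\log\rho)$, and extends $n$ as $\psi\bar n+\psi_rB^\top$ along the $\alpha$-parallel foliation. It then uses $\nabla^\alpha_{\bar n}\bar n=0$ to replace $\diver_\alpha(\psi_rB^\top)$ by $\diver_M(\psi_rB^\top)$, with no first variation involved.

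You instead start from the first-variation identity of Proposition~\ref{prop:general-H} and use the product rule to show $\mathcal Q_M=\rho\,\bar n(\psi)+\rho\,\mathcal C_M$. Your Steps 1--3 check out. In particular, $\Lambda_r=\rho\psi_r$ holds because $\rho$ depends only on $q$, and $n(\rho)=\psi\,\bar n(\rho)+\psi_rB^\top(\rho)$ follows from \eqref{eq:normal-comparison}.

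What your route buys is that it never differentiates $n$ off $M$. So it needs no justification that $\diver_{dV_F}n$ along $M$ may be computed with the particular extension $\psi\bar n+\psi_rB^\top$. The paper leaves this implicit; it holds because that field is the $F$-unit normal of the parallel leaves, and any two $F$-unit extensions give the same divergence on $M$. Your route also makes explicit that the correction terms $\mathcal Q_M/\rho$ and $\bar n(\psi)+\mathcal C_M$ coincide, i.e.\ that the first-variation and divergence characterizations of $H_F$ agree here.

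The price is that your route depends on Proposition~\ref{prop:general-H}, whose proof uses compactly supported variations and the divergence theorem on $M$. The paper's computation, by contrast, is purely pointwise and self-contained.
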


\begin{proof}
For $dV_F=\rho dV_\alpha$,
\begin{equation*}
 \diver_{dV_F}Y=\diver_\alpha Y+Y(\log\rho).
\end{equation*}
Using \eqref{eq:normal-comparison},
\begin{equation*}
 n=\psi\bar n+\psi_rB^\top.
\end{equation*}
Since $\psi_rB^\top$ is tangent to the leaves of the chosen geodesic normal
extension,
\begin{equation*}
 \diver_\alpha(\psi_rB^\top)=\diver_M(\psi_rB^\top).
\end{equation*}
Also
\begin{equation*}
 \diver_\alpha(\psi\bar n)
 =\bar n(\psi)+\psi\diver_\alpha\bar n
 =\bar n(\psi)-\psi H_\alpha.
\end{equation*}
Therefore
\begin{align*}
 H_F
 &=-\diver_{dV_F}n\\
 &=\psi H_\alpha-\bar n(\psi)
 -\diver_M(\psi_rB^\top)-n(\log\rho),
\end{align*}
which is \eqref{eq:reduced-H}.
\end{proof}

The two terms in $\mathcal C_M$ have distinct geometric origins: the first
records the tangential component of the Finsler normal, while the second records
the change of the chosen ambient density along the Finsler normal.  Their sum is
the natural correction term in the weakest same-foliation transfer condition.

\section{All-normal induced-volume rigidity}\label{sec:rigidity}

The induced-volume identity has a rigid consequence when it is required in
every normal direction at a point.  This should be distinguished from the much
weaker requirement imposed only along one prescribed hypersurface family.

\begin{theorem}[All-normal induced-volume rigidity]\label{thm:rigidity}
Let $F_x$ be any smooth strongly convex Finsler norm on $T_xN$, and let the
ambient Finsler density be BH or HT.  Suppose that for every cooriented
hyperplane through $x$,
\begin{equation}\label{eq:rigidity-assumption}
 dV_n^F=L(x)dV_{\bar n}^\alpha
\end{equation}
with the same positive factor $L(x)$, independent of the hyperplane normal.
Then
\begin{equation}\label{eq:rigidity-conclusion}
 \boxed{F_x=L(x)^{1/(n-1)}\alpha_x.}
\end{equation}
In particular, if $L(x)=1$, then $F_x=\alpha_x$.  If the hypothesis holds
smoothly at every point, then $F$ is Riemannian and conformal to $\alpha$.
\end{theorem}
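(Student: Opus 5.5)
The plan is to redo the computation of Lemma~\ref{lem:normal-volume} for an arbitrary strongly convex norm $F_x$, without the $(\alpha,\beta)$ structure. The all-normal hypothesis then turns into a pointwise identity for the dual norm $F_x^*$ on the whole $\alpha$-unit cosphere. Once $F_x$ is known to be a constant multiple of $\alpha_x$, the scaling law \eqref{eq:rho-scale} fixes the constant.

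\emph{Step 1 (the contracted volume for a general norm).} At $x$, write $dV_F=\sigma(x)\,dV_\alpha$, where $\sigma>0$ is the BH or HT density ratio. Fix a cooriented hyperplane $T\subset T_xN$. Let $\bar n$ be its $\alpha$-unit normal and $\bar\nu=a(\bar n,\cdot)$ the corresponding $\alpha$-unit conormal. As in the proof of \eqref{eq:conormal-comparison}, the $F$-unit conormal is $\nu=\bar\nu/F^*(\bar\nu)$. Zero-homogeneity of $dF^*$ gives $n=d_\xi F^*(\bar\nu)$. Decompose $n=c\,\bar n+(\text{tangent to }T)$. Applying $\bar\nu$ and using Euler's relation for the one-homogeneous $F^*$ gives
\[
 c=\bar\nu(n)=\bar\nu\bigl(d_\xi F^*(\bar\nu)\bigr)=F^*(\bar\nu).
\]
The tangential part of $n$ drops out when contracting $dV_F$ against a basis of $T$, exactly as for $B^\top$ before. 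Hence
\[
 dV_n^F=\sigma(x)\,F^*(\bar\nu)\,dV^\alpha_{\bar n}.
\]
This recovers \eqref{eq:induced-volume-comparison} in the special case, and it is the only step that needs care.

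\emph{Step 2 (conformality of the dual norm).} The hypothesis \eqref{eq:rigidity-assumption} holds for every cooriented hyperplane, that is, for every $\alpha$-unit covector $\bar\nu$, with both coorientations allowed. It therefore gives $\sigma F^*(\bar\nu)=L$ whenever $\alpha^*(\bar\nu)=1$. By homogeneity, $F_x^*=(L/\sigma)\,\alpha_x^*$ on all of $T_x^*N$. Dualizing (the dual of $\lambda\alpha^*$ is $\lambda^{-1}\alpha$) yields
\[
 F_x=c\,\alpha_x,\qquad c=\sigma(x)/L(x).
\]
In particular $F_x$ is Euclidean, and reversibility comes for free.

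\emph{Step 3 (fixing the constant).} For $F_x=c\,\alpha_x$, both the BH density \eqref{eq:BH-def} and the HT density \eqref{eq:HT-def} equal $c^n$ times the Riemannian density. This follows from \eqref{eq:rho-scale} with $F_0=\alpha$, where $\rho_\alpha=1$; it can also be checked directly, since the unit ball scales by $c^{-n}$ and $\det g$ by $c^{2n}$. So $\sigma=c^n$. Then $c=\sigma/L=c^n/L$, i.e.\ $c^{n-1}=L$, which is \eqref{eq:rigidity-conclusion} because $n\ge2$. The case $L=1$ gives $F_x=\alpha_x$. If the hypothesis holds smoothly in $x$, then $F=L^{1/(n-1)}\alpha$ with a smooth positive conformal factor, so $F$ is Riemannian and conformal to $\alpha$.

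The main obstacle is Step~1: justifying, for an arbitrary norm rather than the $(\alpha,\beta)$ profile, that the normal component of $n$ equals $F^*(\bar\nu)$. The Euler-relation argument above should settle it. The dual-norm inversion in Step~2 and the scaling in Step~3 are routine.
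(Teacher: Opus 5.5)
Your proposal is correct and follows essentially the same route as the paper. Both arguments use the contracted-volume formula $dV_n^F=\sigma(x)F_x^*(\bar\nu)\,dV_{\bar n}^\alpha$, deduce that $F_x^*$ is constant on the dual $\alpha$-unit sphere, and then use the $c^n$ scaling of the BH/HT density to obtain $c^{n-1}=L$. Your Euler-relation computation $\bar\nu(n)=F^*(\bar\nu)$ simply makes explicit what the paper cites as ``the same contraction argument as in Lemma~\ref{lem:normal-volume}''.
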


\begin{proof}
The argument does not require the $(\alpha,\beta)$ form.  Let $\bar\nu$ be an
$\alpha$-unit conormal to a hyperplane.  The corresponding $F$-unit conormal is
\begin{equation*}
 \nu=\frac{\bar\nu}{F_x^*(\bar\nu)}.
\end{equation*}
If the ambient density ratio at $x$ is $\rho(x)$, the same contraction argument
as in Lemma~\ref{lem:normal-volume} gives
\begin{equation}\label{eq:general-normal-volume}
 dV_n^F=\rho(x)F_x^*(\bar\nu)dV_{\bar n}^\alpha.
\end{equation}
Under \eqref{eq:rigidity-assumption},
\begin{equation*}
 F_x^*(\bar\nu)=\frac{L(x)}{\rho(x)}
\end{equation*}
is constant on the entire dual $\alpha$-unit sphere.  Positive
one-homogeneity therefore gives
\begin{equation*}
 F_x^*=c\alpha_x^*,
\end{equation*}
so $F_x=A\alpha_x$ with $A=c^{-1}$.  For BH and HT volume, scaling the norm by
$A$ multiplies the ambient density by $A^n$, while the unit-conormal
normalization contributes $A^{-1}$.  Hence the induced hypersurface factor is
$A^{n-1}$.  Thus $L=A^{n-1}$ and
$A=L^{1/(n-1)}$, proving \eqref{eq:rigidity-conclusion}.
\end{proof}

\begin{corollary}[Exact all-normal equality]\label{cor:rigidity-exact}
If
\begin{equation*}
 dV_n^F=dV_{\bar n}^\alpha
\end{equation*}
for every admissible hypersurface normal direction at every point, then
$F=\alpha$.  In particular, with the same orientation,
$H_F=H_\alpha$ pointwise and the corresponding volume-preserving variational
problems coincide.
\end{corollary}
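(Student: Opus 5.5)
This is a direct application of Theorem~\ref{thm:rigidity} with $L\equiv1$, followed by a remark on what equality of the norms implies for the variational quantities.

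\textbf{Step 1: the norms coincide.} The hypothesis is exactly \eqref{eq:rigidity-assumption} with $L(x)=1$ at every point $x$ and for every cooriented hyperplane through $x$. So \eqref{eq:rigidity-conclusion} gives $F_x=1^{1/(n-1)}\alpha_x=\alpha_x$ pointwise, hence $F=\alpha$ on $N$.

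The point to watch is the phrase ``every admissible hypersurface normal direction''. The proof of Theorem~\ref{thm:rigidity} needs $F_x^*(\bar\nu)=L/\rho$ on the \emph{whole} dual $\alpha$-unit sphere. I will interpret ``admissible'' as ``all cooriented hyperplanes at $x$''. This is harmless: every cooriented hyperplane at $x$ is tangent to some embedded cooriented hypersurface through $x$, for instance a small piece of an $\alpha$-geodesic hyperplane. The induced-volume factor at $x$ depends only on $T_xM$ and its coorientation, by \eqref{eq:general-normal-volume}. If instead one only assumed a dense set of directions, continuity of $F_x^*$ would still give the same conclusion.

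\textbf{Step 2: the geometric quantities coincide.} Once $F=\alpha$, each ingredient agrees:
\begin{itemize}
\item the densities: $\rho\equiv1$ for both BH and HT, since $F$ is Riemannian;
\item the dual profile: $\psi\equiv1$ and $\psi_r\equiv0$;
\item the unit normals: $n=\bar n$ by \eqref{eq:normal-comparison};
\item the induced measures: $dV_n^F=dV_{\bar n}^\alpha$ by \eqref{eq:induced-volume-comparison}.
\end{itemize}

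\textbf{Step 3: mean curvatures and variational problems.} From \eqref{eq:HF-div} we get $H_F=-\diver_{dV_F}n=-\diver_\alpha\bar n=H_\alpha$ for either orientation, as long as the same side is chosen. The same follows from Proposition~\ref{prop:reduced-H}, since $\mathcal C_M=0$ and $\bar n(\psi)=0$ there.

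Finally, the area functionals $A_F$ and $A_\alpha$ are then literally the same. So are the enclosed-volume functionals, because $dV_F=dV_\alpha$. Hence the volume-preserving variational problems, and their first variations \eqref{eq:first-variation-def}, coincide.
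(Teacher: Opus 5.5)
Your proposal is correct and follows the paper's route: the corollary is the case $L\equiv1$ of Theorem~\ref{thm:rigidity}, whose statement already records $F_x=\alpha_x$ when $L(x)=1$. Once $F=\alpha$, you have $\rho=1$, $n=\bar n$ and $dV_F=dV_\alpha$, so $H_F=H_\alpha$ and the area and enclosed-volume functionals coincide; your remark on what ``admissible'' directions must cover is a harmless clarification that the paper leaves implicit.
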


The quantifier ``every normal direction'' is essential.  In Section
\ref{sec:example} we shall impose exact induced-volume equality only along one
radial foliation; the resulting metric remains genuinely non-Riemannian.

\section{Two isoparametric transfer problems}\label{sec:transfer}

\subsection{Same-hypersurface transfer}

Let $M\subset N$ be a connected cooriented hypersurface which is known to be
$\alpha$-isoparametric.  Fix the chosen $F$-normal side.  After shrinking if
necessary, let
\begin{equation}\label{eq:F-tube}
 U^+=\{\exp_x^F(\tau n_x):x\in M,\ 0\le\tau<\varepsilon\}
\end{equation}
be a regular one-sided $F$-normal tube and let $d_F$ be its forward normal
distance.  Put
\begin{equation}\label{eq:F-leaves}
 M_\tau^F=\{d_F=\tau\},\qquad M_0^F=M.
\end{equation}
We assume the regular leaves are connected.  For each leaf, let $\bar n_\tau$
be its same-side $\alpha$-unit normal and set
\begin{equation}\label{eq:levelI-data}
 q_\tau=\|\beta\|_\alpha^2,
 \qquad r_\tau=\beta(\bar n_\tau),
 \qquad B_\tau^\top=B-r_\tau\bar n_\tau.
\end{equation}
Every geometric quantity below is evaluated on the actual $F$-parallel leaf
$M_\tau^F$; in particular, $H_{\alpha,\tau}$ is the $\alpha$-mean curvature of
$M_\tau^F$, not of the $\alpha$-parallel leaf at the same numerical distance.

\begin{definition}[Same-hypersurface transfer]\label{def:levelI}
The hypersurface $M$ satisfies same-hypersurface transfer on the chosen side if the same
cooriented subset $M\subset N$ is locally $F$-isoparametric.  No equality
between the $\alpha$-normal tube and the $F$-normal tube is required.
\end{definition}

\begin{theorem}[Exact local same-hypersurface criterion]\label{thm:local-levelI}
The hypersurface $M$ satisfies same-hypersurface transfer on the chosen side if and only
if, for every regular leaf of its $F$-normal tube,
\begin{equation}\label{eq:levelI-criterion}
 \boxed{
 d_{M_\tau^F}
 \left[
 \frac{\Lambda(q_\tau,r_\tau)H_{\alpha,\tau}
 -\mathcal Q_{M_\tau^F}}
 {\rho(q_\tau)}
 \right]=0.
 }
\end{equation}
Equivalently, every regular $F$-parallel leaf has constant $F$-volume mean
curvature.
\end{theorem}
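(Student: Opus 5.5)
The plan is to reduce the statement to the Transnormal--CMC criterion (Lemma~\ref{lem:transnormal-CMC}) applied to the forward normal distance $d_F$. Then Proposition~\ref{prop:general-H} translates constancy of $H_F$ into \eqref{eq:levelI-criterion}. We first record that, by \eqref{eq:distance-transnormal}, $d_F$ is $F$-transnormal on the regular one-sided tube $U^+$, with $a_F\equiv1$. Its regular levels are exactly the leaves $M_\tau^F$, assumed connected. The normal selected by $dd_F$ is the forward $F$-unit normal $n_\tau=\nabla^Fd_F$ on the chosen side. Next we apply Proposition~\ref{prop:general-H} leaf by leaf. Since that proposition uses no leafwise constancy assumption, on each $M_\tau^F$ it gives
\[
 H_{F,\tau}=\frac{\Lambda(q_\tau,r_\tau)H_{\alpha,\tau}-\mathcal Q_{M_\tau^F}}{\rho(q_\tau)},
\]
because $\rho>0$. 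Hence \eqref{eq:levelI-criterion} holds exactly when each regular $F$-parallel leaf has constant $F$-volume mean curvature; this already proves the stated equivalent reformulation.

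For the implication ``criterion $\Rightarrow$ transfer'', note that if every leaf has constant $H_F$, Lemma~\ref{lem:transnormal-CMC} makes $d_F$ $F$-isoparametric on $U^+$. We then check that $d_F$ is $C^1$ up to $M$, so that $M=d_F^{-1}(0)$ is a regular level; on the opposite side we repeat the argument with the reverse metric, as indicated after \eqref{eq:distance-transnormal}. Explicitly, $\Delta_Fd_F=-H_F=b_F(d_F)$ by \eqref{eq:lap-H-relation}, and $b_F(\tau)=-H_{F,\tau}$ is continuous in $\tau$ by smoothness of the tube. So $M$ is locally $F$-isoparametric.

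For the converse, suppose that near each point $M$ is a regular level of some $F$-isoparametric $g$. By \eqref{eq:H-from-lap}, the level sets of $g$ have constant $F$-volume mean curvature. We must show that they coincide locally with the leaves $M_\tau^F$. This identification is the main step. I would use the reparametrization remark \eqref{eq:reparam-grad}--\eqref{eq:reparam-lap}: set $\chi(t)=\int_{t_0}^t ds/a_F(s)$ and $\widetilde g=\chi(g)$. Then $F^*(d\widetilde g)=1$, $\widetilde g$ is still isoparametric, and $\widetilde g=0$ on $M$. Integral curves of $\nabla^F\widetilde g$ are unit-speed $F$-geodesics leaving $M$ $F$-orthogonally, because the gradient of a distance-type function with $F^*(d\widetilde g)=1$ is geodesic, via the Legendre transform and the first variation of $F$-length. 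Therefore $\widetilde g=d_F$ on $U^+$ after shrinking, so the $F$-normal leaves are the levels of $g$ and have constant $H_F$. The local statement passes to each regular leaf by connectedness. The delicate points are the nonreversibility, which means only the forward side is handled and the other side is treated with the reverse metric, and the local-to-leafwise passage. For the latter I would argue that $H_{F,\tau}$ is locally constant on each connected leaf and hence constant.
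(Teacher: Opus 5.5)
Your proposal follows the paper's proof essentially step for step. The steps are: $d_F$ is $F$-transnormal with $a_F\equiv1$; a local isoparametric defining function is reparametrized by $\int d\sigma/a_F(\sigma)$ and identified with $d_F$ through the normal-geodesic property; and Lemma~\ref{lem:transnormal-CMC} together with Proposition~\ref{prop:general-H} converts leafwise constancy of $H_F$ into \eqref{eq:levelI-criterion}.

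One remark should be dropped or tightened: you say the opposite side is handled by ``repeating the argument with the reverse metric.'' The criterion constrains only the leaves of the chosen-side tube $U^+$ and says nothing about the backward parallel leaves, so it does not supply the hypothesis needed on the other side. The conclusion should be read one-sidedly, as the paper does with ``on the chosen side.''
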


\begin{proof}
The forward distance coordinate satisfies
\begin{equation}\label{eq:dF-transnormal}
 F^*(dd_F)=1
\end{equation}
on the regular tube, so it is automatically $F$-transnormal.  If $d_F$ is
$F$-isoparametric, then $M=M_0^F$ is locally $F$-isoparametric.

Conversely, suppose that $M$ is locally $F$-isoparametric.  Let $u$ be a local
$F$-isoparametric defining function with $M=\{u=c\}$ and
\begin{equation*}
 F^*(du)=a_F(u)>0.
\end{equation*}
Define
\begin{equation}\label{eq:distance-reparam}
 \Theta(t)=\int_c^t\frac{d\sigma}{a_F(\sigma)},
 \qquad d=\Theta(u).
\end{equation}
Positive homogeneity gives
\begin{equation*}
 F^*(dd)=\Theta'(u)F^*(du)=1.
\end{equation*}
The normal-geodesic property of regular Finsler transnormal functions implies,
after restricting to the chosen side, that $d$ is exactly the local forward
normal-distance coordinate $d_F$.  Since increasing regular
reparametrizations preserve isoparametricity, $d_F$ is $F$-isoparametric.
Thus same-hypersurface transfer is equivalent to the $F$-isoparametricity of $d_F$.

By Lemma~\ref{lem:transnormal-CMC}, because \eqref{eq:dF-transnormal} already
provides transnormality, $d_F$ is $F$-isoparametric if and only if each connected
regular level $M_\tau^F$ has constant $F$-volume mean curvature.  Proposition
\ref{prop:general-H} gives
\begin{equation}\label{eq:levelI-H}
 H_{F,\tau}
 =\frac{\Lambda(q_\tau,r_\tau)H_{\alpha,\tau}
 -\mathcal Q_{M_\tau^F}}{\rho(q_\tau)}.
\end{equation}
Since $M_\tau^F$ is connected, constancy is equivalent to the vanishing of the
tangential differential, which is exactly \eqref{eq:levelI-criterion}.
\end{proof}

\subsubsection*{Randers specialization}

Let $F$ now be a Randers metric with navigation datum $(h,W)$ and
$|W|_h<1$, equipped with BH volume.  In the generalized
$(\alpha,\beta)$ notation we set
\begin{equation}\label{eq:randers-dictionary}
 \alpha=h,
 \qquad
 \beta=W_h^\flat,
 \qquad
 B=W.
\end{equation}
The navigation dual norm is
\begin{equation}\label{eq:randers-dual}
 F^*(\xi)=h^*(\xi)+\xi(W).
\end{equation}
Therefore, with $r=\xi(W)/h^*(\xi)$,
\begin{equation}\label{eq:randers-psi}
 \boxed{\psi(q,r)=1+r,\qquad \psi_q=0,\qquad \psi_r=1.}
\end{equation}
The $F$-unit ball is the translation by $W$ of the $h$-unit ball, so the BH
density agrees with the Riemannian density:
\begin{equation}\label{eq:randers-rho}
 \boxed{dV_F=dV_h,\qquad \rho=1.}
\end{equation}
Hence
\begin{equation}\label{eq:randers-Lambda}
 \Lambda=1+r,
 \qquad \Lambda_q=0,
 \qquad \Lambda_r=1,
\end{equation}
and the normal formula \eqref{eq:normal-comparison} becomes
\begin{equation}\label{eq:randers-normal}
 \boxed{n=(1+r)\bar n+(W-r\bar n)=\bar n+W.}
\end{equation}

Assume that $F$ has isotropic $S$-curvature
$S=(n+1)k(x)F$.  In the convention of \cite{HeDongYin2020}, this is equivalent
to
\begin{equation}\label{eq:randers-homothetic}
 \frac12(w_{i|j}+w_{j|i})=-2k(x)h_{ij}.
\end{equation}
Because $\beta=W_h^\flat$,
\begin{equation}\label{eq:randers-normal-beta}
 (\nabla^h_{\bar n}\beta)(\bar n)
 =h(\nabla^h_{\bar n}W,\bar n)=-2k(x).
\end{equation}
Choose an $h$-orthonormal tangent frame $e_1,\ldots,e_{n-1}$.  Since
$W^\top=W-r\bar n$,
\begin{align}
 \diver_MW^\top
 &=\sum_{a=1}^{n-1}h(\nabla^h_{e_a}(W-r\bar n),e_a)\notag\\
 &=\sum_{a=1}^{n-1}h(\nabla^h_{e_a}W,e_a)
 -r\sum_{a=1}^{n-1}h(\nabla^h_{e_a}\bar n,e_a)\notag\\
 &=-2(n-1)k(x)+rH_h,
\label{eq:randers-divW}
\end{align}
where $H_h=-\diver_h\bar n$.  Thus
\begin{equation}\label{eq:randers-Q}
 \boxed{\mathcal Q_M=-2nk(x)+rH_h.}
\end{equation}
Substitution in \eqref{eq:general-H} gives the same-hypersurface identity
\begin{equation}\label{eq:randers-H-volume}
 \boxed{H_F^{\mathrm{vol}}=H_h+2nk(x).}
\end{equation}
For comparison with the anisotropic shape operator of Section 2, the Randers
shape-operator identity of \cite{HeDongYin2020} gives
$\widehat H_F=H_h+(n-1)k(x)$, while
$S(n)=(n+1)k(x)$; hence
$H_F^{\mathrm{vol}}=\widehat H_F+S(n)$ in these conventions.

For a Randers space form, $k(x)=k_0$ is constant.  The same-hypersurface criterion then
reduces to
\begin{equation}\label{eq:randers-levelI-criterion}
 d_{M_\tau^F}H_h(M_\tau^F)=0.
\end{equation}
It remains to compare the actual $F$-normal tube with the $h$-normal tube.
The homothetic equation becomes
\begin{equation}\label{eq:LieW-constant}
 \mathcal L_Wh=-4k_0h.
\end{equation}
Let $\varphi_\tau$ be the local flow of $W$.  Then
\begin{equation}\label{eq:flow-homothety-main}
 \boxed{\varphi_\tau^*h=e^{-4k_0\tau}h,}
\end{equation}
so $\varphi_\tau$ scales $h$-lengths by
$c_\tau=e^{-2k_0\tau}$.

Let
\begin{equation*}
 \rho_x(s)=\exp_x^h(s\bar n_x)
\end{equation*}
be the unit-speed $h$-normal geodesic and put
\begin{equation}\label{eq:s-of-tau}
 s(\tau)=\int_0^\tau e^{2k_0u}\,du
 =\begin{cases}
 \dfrac{e^{2k_0\tau}-1}{2k_0},&k_0\ne0,\\[2mm]
 \tau,&k_0=0.
 \end{cases}
\end{equation}
Then $s'(\tau)=e^{2k_0\tau}$.  The curve
\begin{equation}\label{eq:randers-normal-geodesic}
 \gamma_x(\tau)=\varphi_\tau(\rho_x(s(\tau)))
\end{equation}
satisfies
\begin{equation*}
 \dot\gamma_x=W+s'(\tau)d\varphi_\tau(\rho_x'),
 \qquad
 |\dot\gamma_x-W|_h=s'(\tau)e^{-2k_0\tau}=1.
\end{equation*}
Thus the homothetic navigation geodesic theorem identifies it with the
$F$-unit geodesic issued from $x$ in the normal direction
$n_x=\bar n_x+W_x$.  Consequently
\begin{equation}\label{eq:randers-tube-flow}
 \boxed{M_\tau^F=\varphi_\tau(M_{s(\tau)}^h).}
\end{equation}

We record explicitly how the mean curvature transforms under the homothety.
If a diffeomorphism $\varphi$ satisfies $\varphi^*h=c^2h$ with constant $c>0$,
then $\varphi:(N,c^2h)\to(N,h)$ is an isometry.  Constant scaling does not
change the Levi--Civita connection.  If $\bar n$ is the $h$-unit normal to
$\Sigma$, then the unit normal to $\varphi(\Sigma)$ is
$c^{-1}d\varphi(\bar n)$, and
\begin{equation*}
 A_{\varphi(\Sigma)}(d\varphi X)
 =c^{-1}d\varphi(A_\Sigma X).
\end{equation*}
Hence every principal curvature, and therefore the unnormalized mean curvature,
is multiplied by $c^{-1}$.  Applying this to \eqref{eq:flow-homothety-main},
\begin{equation}\label{eq:randers-H-flow}
 H_h(M_\tau^F)
 =e^{2k_0\tau}H_h(M_{s(\tau)}^h)\circ\varphi_\tau^{-1}.
\end{equation}
If $M$ is $h$-isoparametric, the right-hand side is constant on each
$M_\tau^F$, so \eqref{eq:randers-levelI-criterion} holds.  Conversely, if the
$F$-tube is $h$-CMC, the inverse homothety and the local diffeomorphism
$\tau\mapsto s(\tau)$ show that all sufficiently small $h$-parallel leaves are
$h$-CMC.  The transnormal--CMC criterion for the $h$-distance then gives
\begin{equation}\label{eq:randers-levelI-equivalence}
 \boxed{
 M\text{ is locally }F\text{-isoparametric}
 \Longleftrightarrow
 M\text{ is locally }h\text{-isoparametric}.
 }
\end{equation}
This is the hypersurface-level mechanism behind Theorem 1.1 of
\cite{HeDongYin2020}.  The equality of the numbers and multiplicities of
principal curvatures in that theorem is stronger than the scalar mean-curvature
argument above and follows from the Randers shape-operator relation
$A^F=A^h+k_0I$.

The equivalence \eqref{eq:randers-levelI-equivalence}, together with the
classical Riemannian classification in real space forms, gives the
same-hypersurface mechanism used in the Randers classification theorem.

\subsubsection*{Global same-hypersurface transfer}

The local criterion has a direct global form once the prescribed hypersurface
is assumed to belong to a globally defined $F$-transnormal system.

\begin{theorem}[Global same-hypersurface transfer]\label{thm:global-levelI}
Let $M\subset N$ be a connected cooriented $\alpha$-isoparametric
hypersurface.  Let $g:N\to\mathbb R$ be a non-constant $C^1$ function, smooth
on $N_g=\{dg\ne0\}$, whose connected regular levels are $F$-transnormal, and
suppose that $M=g^{-1}(t_0)$ for a regular value $t_0$.  Denote the regular
levels by $M_t^F=g^{-1}(t)$.  Then $g$ is $F$-isoparametric if and only if, for
every regular value $t$,
\begin{equation}\label{eq:global-levelI-criterion}
 \boxed{
 d_{M_t^F}
 \left[
 \frac{\Lambda(q_t,r_t)H_{\alpha,t}-\mathcal Q_{M_t^F}}
 {\rho(q_t)}
 \right]=0.}
\end{equation}
Here $H_{\alpha,t}$ is the $\alpha$-mean curvature of the actual leaf
$M_t^F$.
\end{theorem}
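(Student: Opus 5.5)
The plan is to reduce the global statement to the transnormal--CMC criterion (Lemma~\ref{lem:transnormal-CMC}) and then rewrite the CMC condition with the pointwise identity of Proposition~\ref{prop:general-H}. No comparison between the $F$-level family and the $\alpha$-parallel family is needed: every quantity is evaluated on the actual leaves $M_t^F$.

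\emph{Step 1: transnormality and the normal.} Since $g$ is $F$-transnormal, on $N_g$ we have $F^*(dg)=a_F(g)$ with $a_F>0$. By \eqref{eq:unitnormal-f}, the field $n=\nabla^Fg/a_F(g)$ is the $F$-unit normal of every regular level $M_t^F$. It lies on the side selected by $dg$, and its conormal $\mathcal L_F(n)=dg/a_F(g)$ is a positive multiple of $dg$. On each leaf we take $\bar n_t$ to be the $\alpha$-unit normal on the same side, that is, with $\bar\nu_t$ a positive multiple of $dg$. We then define $q_t$, $r_t$, $B_t^\top$ as in \eqref{eq:levelI-data}, with $t$ in place of $\tau$. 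This coorientation is exactly the one used in Lemma~\ref{lem:normal-volume}, so Proposition~\ref{prop:general-H} applies to each $M_t^F$ as a smooth cooriented hypersurface. It gives, pointwise on $M_t^F$,
\begin{equation*}
 H_F(M_t^F)=\frac{\Lambda(q_t,r_t)H_{\alpha,t}-\mathcal Q_{M_t^F}}{\rho(q_t)},
\end{equation*}
using $\rho>0$. The first variation in Proposition~\ref{prop:general-H} is local, via compactly supported $\eta$, so noncompact leaves cause no trouble.

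\emph{Step 2: apply the criterion.} Lemma~\ref{lem:transnormal-CMC} applies because $g$ is transnormal with connected regular levels. It says that $g$ is $F$-isoparametric if and only if each regular level has constant $H_F$ for the normal selected by $dg$. A smooth function on a connected leaf is constant exactly when its tangential differential vanishes, so CMC of $M_t^F$ is equivalent to \eqref{eq:global-levelI-criterion}.

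\emph{Step 3: consistency checks.} For the forward direction, \eqref{eq:H-from-lap} gives $H_F=a_F'(g)-b_F(g)/a_F(g)$, which is constant on each leaf, so \eqref{eq:global-levelI-criterion} holds. For the converse, I expect the only real obstacle. Leafwise constancy gives $H_F=h(t)$ on $M_t^F$, and to obtain $\Delta_Fg=b_F(g)$ with $b_F$ \emph{continuous} on $J$ we need $h$ to be a continuous function of $t$. By \eqref{eq:lap-H-relation}, $b_F(t)=a_Fa_F'-a_Fh$. I would argue that $H_F$ is a smooth function on $N_g$ that is constant on the connected fibres of the submersion $g|_{N_g}$. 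It therefore descends to a continuous, indeed smooth, function locally on $g(N_g)$, using local sections of the submersion. If a value $t$ has several connected regular components, the statement's hypothesis that the relevant levels are connected is what lets $h$ be single-valued; otherwise one reads the conclusion componentwise, as in Lemma~\ref{lem:transnormal-CMC}. The $\alpha$-isoparametricity of $M$ is not used in the equivalence itself. It only fixes the setting, so that $M=M_{t_0}^F$ is one of the leaves being tested.
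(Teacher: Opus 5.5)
Your proposal is correct and follows the paper's proof essentially verbatim. Transnormality of $g$ together with Lemma~\ref{lem:transnormal-CMC} reduces $F$-isoparametricity to leafwise constancy of $H_F$. Proposition~\ref{prop:general-H}, applied to each actual leaf $M_t^F$ with the $\alpha$-normal cooriented by $dg$, then turns this into the vanishing of the tangential differential in \eqref{eq:global-levelI-criterion}; your remarks on coorientation, on continuity of $b_F$ via local sections of $g|_{N_g}$, and on the unused $\alpha$-isoparametricity of $M$ are accurate details the paper leaves implicit.
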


\begin{proof}
The function $g$ is already $F$-transnormal on $N_g$.  Hence, by
Lemma~\ref{lem:transnormal-CMC}, $g$ is $F$-isoparametric if and only if every
connected regular level $M_t^F$ has constant $F$-volume mean curvature.  On
$M_t^F$, Proposition~\ref{prop:general-H} gives
\[
 H_{F,t}=
 \frac{\Lambda(q_t,r_t)H_{\alpha,t}-\mathcal Q_{M_t^F}}
 {\rho(q_t)}.
\]
Since the leaf is connected, its $F$-volume mean curvature is constant if and
only if the tangential differential of the right-hand side vanishes.  This is
exactly \eqref{eq:global-levelI-criterion}.
\end{proof}

Notice that the global function $g$ is not required to be an
$\alpha$-isoparametric defining function and its regular levels need not agree
with the $\alpha$-parallel family of $M$.  The global statement therefore
retains the same-hypersurface character of the local problem.

\subsection{Same-foliation transfer}

Let $U$ be a saturated regular neighborhood and let
\begin{equation}\label{eq:levelII-f}
 f:U\to I
\end{equation}
have connected regular leaves
\begin{equation*}
 M_t=f^{-1}(t).
\end{equation*}
When $f$ is $\alpha$-transnormal we write
\begin{equation}\label{eq:alpha-transnormal-levelII}
 \alpha^*(df)=a(f)>0,
 \qquad
 \bar n=\frac{\nabla^\alpha f}{a(f)},
 \qquad
 r=\beta(\bar n).
\end{equation}

\begin{definition}[Same-foliation transfer]\label{def:levelII}
A regular cooriented foliation satisfies same-foliation transfer if the same
foliation is isoparametric for $\alpha$ and $F$.  Since increasing regular
reparametrizations do not change isoparametricity, this is a property of the
foliation and not of a particular numerical defining function.
\end{definition}

The exact same-foliation theorem below combines the local criterion and the two-way
transfer statement into one result.

\begin{theorem}[Exact and two-way same-foliation criterion]\label{thm:levelII-unified}
Let $f$ have connected regular levels and define
\begin{equation}\label{eq:C-levelII}
 \mathcal C_{M_t}=\diver_{M_t}(\psi_rB^\top)+n(\log\rho).
\end{equation}
Then the following statements hold.

\begin{enumerate}[label=\textup{(\alph*)},leftmargin=2.5em]
\item Suppose $f$ is $\alpha$-isoparametric,
\begin{equation}\label{eq:alpha-iso-levelII}
 \alpha^*(df)=a(f)>0,
 \qquad
 \Delta_\alpha f=b_\alpha(f).
\end{equation}
Then the same foliation is $F$-isoparametric if and only if, on every regular
leaf,
\begin{equation}\label{eq:levelII-exact}
 \boxed{d_{M_t}\psi(q,r)=0,
 \qquad
 d_{M_t}\mathcal C_{M_t}=0.}
\end{equation}

\item More generally, suppose there exist one-variable functions $\psi>0$ and
$C$ such that
\begin{equation}\label{eq:reduced-compatible}
 \psi(q,r)=\psi(f),
 \qquad
 \mathcal C_{M_t}=C(f)
\end{equation}
on the regular region.  Then
\begin{equation}\label{eq:two-way-levelII}
 \boxed{f\text{ is }F\text{-isoparametric}
 \Longleftrightarrow
 f\text{ is }\alpha\text{-isoparametric}.}
\end{equation}
Whenever $\alpha^*(df)=a(f)$,
\begin{equation}\label{eq:aF-reduced}
 \boxed{F^*(df)=a_F(f)=a(f)\psi(f),}
\end{equation}
\begin{equation}\label{eq:HF-reduced-levelII}
 \boxed{H_F=\psi(f)H_\alpha-a(f)\psi'(f)-C(f).}
\end{equation}
If $f$ is $\alpha$-isoparametric, then
\begin{equation}\label{eq:DeltaF-levelII}
 \boxed{
 \Delta_Ff
 =\psi(f)^2b_\alpha(f)
 +2a(f)^2\psi(f)\psi'(f)
 +a(f)\psi(f)C(f).
 }
\end{equation}
\end{enumerate}
\end{theorem}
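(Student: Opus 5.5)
The plan is to reduce everything to Lemma~\ref{lem:normal-volume}, Proposition~\ref{prop:reduced-H} and Lemma~\ref{lem:transnormal-CMC}, arguing leaf by leaf.

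\textbf{Transnormality.} Let $\alpha^*(df)=a(f)>0$. Then $df=a(f)\bar\nu$, where $\bar\nu=\mathcal L_\alpha(\bar n)$ is the $\alpha$-unit conormal of the leaf. By positive homogeneity of $F^*$ and \eqref{eq:dual-profile},
\[
F^*(df)=a(f)\,\alpha^*(\bar\nu)\,\psi(q,r)=a(f)\psi(q,r).
\]
Hence $f$ is $F$-transnormal if and only if $\psi(q,r)$ is a function of $f$ alone. Since the leaves are connected, this is equivalent to $d_{M_t}\psi(q,r)=0$ on every regular leaf. This gives the first condition in \eqref{eq:levelII-exact} and formula \eqref{eq:aF-reduced}. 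The $F$-normal selected by $df$ is $n=\mathcal L_F^{-1}(\nu)$ with $\nu=\bar\nu/\psi$, which is exactly the normal of Lemma~\ref{lem:normal-volume}.

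\textbf{Mean curvature.} Next apply \eqref{eq:reduced-H}. Once $\psi=\psi(f)$, we have $\bar n(\psi)=\psi'(f)\bar n(f)=\psi'(f)a(f)$, because $\bar n(f)=df(\bar n)=a(f)$. Hence
\[
H_F=\psi(f)H_\alpha-a(f)\psi'(f)-\mathcal C_{M_t},
\]
and this is \eqref{eq:HF-reduced-levelII} when $\mathcal C_{M_t}=C(f)$.

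\emph{Main obstacle.} Proposition~\ref{prop:reduced-H} was derived using the $\alpha$-geodesic normal extension of $\bar n$, while here $\bar n=\nabla^\alpha f/a(f)$ is extended by the foliation. I would check that these agree. For an $\alpha$-transnormal $f$, the integral curves of $\bar n$ are unit-speed $\alpha$-geodesics: $\nabla_{\bar n}\bar n=0$ follows from $|\nabla f|=a(f)$ via the standard Hessian symmetry argument. So the leaves of the geodesic extension are the $M_t$ themselves.

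The remaining point is that $\diver_\alpha(\psi_rB^\top)=\diver_{M_t}(\psi_rB^\top)$. This holds because $\psi_rB^\top$ is tangent to the leaves and $\langle\nabla_{\bar n}(\psi_rB^\top),\bar n\rangle=-\langle\psi_rB^\top,\nabla_{\bar n}\bar n\rangle=0$. Note that no separate $\alpha$-normal-derivative subtlety arises for $\psi(q,r)$: on each leaf it only enters through $\bar n(\psi)$, which we computed directly.

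\textbf{Part (a).} Since $f$ is $\alpha$-isoparametric, $H_\alpha$ is constant on each leaf by Lemma~\ref{lem:transnormal-CMC}.

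For the forward direction, suppose $f$ is $F$-isoparametric. Transnormality gives $\psi=\psi(f)$. Lemma~\ref{lem:transnormal-CMC} gives that $H_F$ is leafwise constant. The displayed formula then forces $\mathcal C_{M_t}$ to be leafwise constant.

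For the converse, both conditions make $f$ $F$-transnormal with $H_F$ leafwise constant, so Lemma~\ref{lem:transnormal-CMC} applies.

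\textbf{Part (b).} Under \eqref{eq:reduced-compatible}, the same formula shows that $H_F$ is leafwise constant if and only if $H_\alpha$ is. Transnormality transfers both ways, since $F^*(df)=a(f)\psi(f)$ and $\psi>0$. The equivalence \eqref{eq:two-way-levelII} then follows from Lemma~\ref{lem:transnormal-CMC} applied to each metric.

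\textbf{Laplacian.} From \eqref{eq:H-from-lap} for $\alpha$, $H_\alpha=a'-b_\alpha/a$. Equation \eqref{eq:lap-H-relation} for $F$ with $a_F=a\psi$ gives $\Delta_Ff=a_Fa_F'-a_FH_F$. Substituting $a_F'=a'\psi+a\psi'$ and the expression for $H_F$:
\begin{align*}
\Delta_Ff
&=a\psi(a'\psi+a\psi')-a\psi\bigl(\psi a'-\psi b_\alpha/a-a\psi'-C\bigr)\\
&=\psi^2b_\alpha+2a^2\psi\psi'+a\psi C.
\end{align*}
This is \eqref{eq:DeltaF-levelII}.
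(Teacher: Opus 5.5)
Your proposal is correct and follows the paper's proof essentially step for step. It uses transnormality via $F^*(df)=a(f)\psi(q,r)$, the reduced formula of Proposition~\ref{prop:reduced-H} with $\bar n(\psi)=a(f)\psi'(f)$, Lemma~\ref{lem:transnormal-CMC}, and the same Laplacian algebra.

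There are two small differences. In the forward direction of (a), the hypothesis is about the foliation, so the paper first writes an arbitrary $F$-isoparametric defining function of the leaves as $\widetilde f=\chi(f)$ with $\chi'>0$ and only then concludes that $f$ itself is $F$-transnormal. You skip this by assuming directly that $f$ is $F$-isoparametric, which is harmless by reparametrization invariance. In the other direction, you make explicit a check the paper leaves implicit: the geodesic normal extension used in Proposition~\ref{prop:reduced-H} coincides with the foliation extension $\nabla^\alpha f/a(f)$.
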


\begin{proof}
Assume first that $f$ is $\alpha$-isoparametric and that the same foliation is
$F$-isoparametric.  Let $\widetilde f$ be a local $F$-isoparametric defining
function whose connected level sets are exactly the $M_t$ with the same
coorientation.  Since $f$ and $\widetilde f$ are submersions with identical
connected level sets on a saturated neighborhood, there is an increasing
smooth function $\chi$ such that
\begin{equation}\label{eq:levelII-reparam}
 \widetilde f=\chi(f),
 \qquad \chi'>0.
\end{equation}
If $F^*(d\widetilde f)=\widetilde a_F(\widetilde f)$, then homogeneity gives
\begin{equation*}
 \chi'(f)F^*(df)=\widetilde a_F(\chi(f)),
\end{equation*}
so $F^*(df)$ is a function of $f$.  But
\begin{equation}\label{eq:Fstar-df-general}
 F^*(df)=\alpha^*(df)\psi(q,r)=a(f)\psi(q,r).
\end{equation}
Since $a(f)$ is already a leaf function, $\psi(q,r)$ is constant on every
connected leaf.  Thus the first condition in \eqref{eq:levelII-exact} holds and
we may write $\psi(q,r)=\psi(f)$.

The $\alpha$-isoparametricity of $f$ implies that $H_\alpha$ is constant on
each regular level.  Since $\bar n(f)=a(f)$,
\begin{equation}\label{eq:normal-psi-levelII}
 \bar n(\psi)=a(f)\psi'(f).
\end{equation}
Proposition~\ref{prop:reduced-H} therefore gives
\begin{equation}\label{eq:HF-levelII-proof}
 H_F=\psi(f)H_\alpha-a(f)\psi'(f)-\mathcal C_{M_t}.
\end{equation}
The common foliation is $F$-isoparametric, so $H_F$ is also constant on each
leaf.  Every other term in \eqref{eq:HF-levelII-proof} is a leaf function;
therefore $\mathcal C_{M_t}$ is constant on each leaf.  This proves the
necessity of \eqref{eq:levelII-exact}.

Conversely, assume \eqref{eq:levelII-exact}.  Connectedness yields
$\psi(q,r)=\psi(f)>0$, and \eqref{eq:Fstar-df-general} shows that $f$ itself is
$F$-transnormal.  Formula \eqref{eq:HF-levelII-proof}, together with the
$\alpha$-isoparametricity of $f$ and the second condition in
\eqref{eq:levelII-exact}, shows that every regular leaf is $F$-volume CMC.
Lemma~\ref{lem:transnormal-CMC} implies that $f$ is $F$-isoparametric.  This
proves part (a).

For part (b), \eqref{eq:reduced-compatible} gives immediately
\begin{equation*}
 F^*(df)=\psi(f)\alpha^*(df).
\end{equation*}
Because $\psi(f)>0$, $F$-transnormality and $\alpha$-transnormality are
equivalent.  Under either transnormal condition write
$\alpha^*(df)=a(f)$.  Then \eqref{eq:normal-psi-levelII} and
Proposition~\ref{prop:reduced-H} give \eqref{eq:HF-reduced-levelII}.  Since all
coefficients except the mean curvature are functions of $f$,
\begin{equation*}
 H_F|_{M_t}\text{ is constant}
 \Longleftrightarrow
 H_\alpha|_{M_t}\text{ is constant}.
\end{equation*}
Together with the transnormal--CMC criterion this proves
\eqref{eq:two-way-levelII}.

Finally, if $f$ is $\alpha$-isoparametric, then
\begin{equation*}
 H_\alpha=a'(f)-\frac{b_\alpha(f)}{a(f)}.
\end{equation*}
Using $a_F=a\psi$ and
$\Delta_Ff=a_Fa_F'-a_FH_F$, we compute
\begin{align*}
 \Delta_Ff
 &=a\psi(a'\psi+a\psi')
 -a\psi(\psi H_\alpha-a\psi'-C)\\
 &=\psi^2a(a'-H_\alpha)+2a^2\psi\psi'+a\psi C\\
 &=\psi^2b_\alpha+2a^2\psi\psi'+a\psi C,
\end{align*}
which proves \eqref{eq:DeltaF-levelII}.
\end{proof}

\subsubsection*{Randers specialization}

Keep the Randers navigation notation \eqref{eq:randers-dictionary} and assume
that $W$ is homothetic with constant parameter $k_0$.  Let $f$ be an
$h$-isoparametric function with connected regular leaves and write
\begin{equation}\label{eq:randers-h-isof}
 h^*(df)=a(f)>0,
 \qquad
 \Delta_hf=b(f),
 \qquad
 \bar n=\frac{\nabla^hf}{a(f)}.
\end{equation}
Then
\begin{align}
 df(W)
 &=h(\nabla^hf,W)\notag\\
 &=a(f)h(\bar n,W)\notag\\
 &=a(f)r,
\end{align}
so
\begin{equation}\label{eq:randers-r-dfW}
 \boxed{r=\frac{df(W)}{a(f)}.}
\end{equation}
Since $\psi=1+r$,
\begin{equation}\label{eq:randers-psi-dfW}
 \psi(q,r)=1+\frac{df(W)}{a(f)}.
\end{equation}
On a fixed connected leaf $a(f)=a(t)$ is constant, and hence
\begin{equation}\label{eq:randers-first-obstruction}
 d_{M_t}\psi=0
 \Longleftrightarrow
 d_{M_t}(df(W))=0
 \Longleftrightarrow
 df(W)=\varphi(f)
\end{equation}
for a smooth one-variable function $\varphi$.  Moreover,
\begin{equation}\label{eq:randers-aF-levelII}
 \boxed{F^*(df)=h^*(df)+df(W)=a(f)+\varphi(f).}
\end{equation}

The second obstruction is automatic.  Since $\rho=1$, $\psi_r=1$, and $B=W$,
\begin{equation}\label{eq:randers-C}
 \mathcal C_{M_t}=\diver_{M_t}W^\top.
\end{equation}
Formula \eqref{eq:randers-divW} gives
\begin{equation}\label{eq:randers-C-expanded}
 \mathcal C_{M_t}=-2(n-1)k_0+rH_h.
\end{equation}
The $h$-isoparametric equation implies
\begin{align}
 b(f)=\Delta_hf
 &=\diver_h(a(f)\bar n)\notag\\
 &=a(f)a'(f)-a(f)H_h,
\end{align}
therefore
\begin{equation}\label{eq:Hh-leaf-function}
 H_h=a'(f)-\frac{b(f)}{a(f)}.
\end{equation}
Under $df(W)=\varphi(f)$,
\begin{equation}\label{eq:randers-C-one-variable}
 \mathcal C_{M_t}
 =-2(n-1)k_0
 +\frac{\varphi(f)}{a(f)}
 \left(a'(f)-\frac{b(f)}{a(f)}\right),
\end{equation}
which is a function of $f$ alone.  Hence the second condition in
\eqref{eq:levelII-exact} follows automatically from the first.

There is also a useful normal derivative identity.  Since an
$h$-transnormal function has geodesic unit normal,
$\nabla^h_{\bar n}\bar n=0$.  Thus
\begin{equation*}
 \bar n(r)
 =h(\nabla^h_{\bar n}W,\bar n)=-2k_0.
\end{equation*}
If $r=\varphi(f)/a(f)$ and $\bar n(f)=a(f)$, then
\begin{equation}\label{eq:randers-rprime}
 \boxed{a(f)r'(f)=-2k_0,}
\end{equation}
or equivalently
\begin{equation}\label{eq:randers-phi-ode}
 \varphi'(f)-\frac{a'(f)}{a(f)}\varphi(f)=-2k_0.
\end{equation}
Substituting into the reduced mean-curvature formula gives again
\begin{align*}
 H_F^{\mathrm{vol}}
 &=(1+r)H_h-a r'-[-2(n-1)k_0+rH_h]\\
 &=H_h+2nk_0,
\end{align*}
in agreement with \eqref{eq:randers-H-volume}.

We have therefore obtained the following exact specialization.

\begin{corollary}[Randers same-foliation criterion]\label{cor:randers-levelII}
Let $F$ be a Randers metric with navigation datum $(h,W)$ and BH volume, and
suppose $W$ is homothetic with constant parameter $k_0$ as in
\eqref{eq:randers-homothetic}.  Let $f$ be an $h$-isoparametric function with
connected regular leaves.  Then the same foliation is $F$-isoparametric if and
only if
\begin{equation}\label{eq:randers-thm39}
 \boxed{df(W)=\varphi(f)}
\end{equation}
for a smooth one-variable function $\varphi$.  In this case
\begin{equation*}
 F^*(df)=a(f)+\varphi(f),
 \qquad
 H_F^{\mathrm{vol}}=H_h+2nk_0.
\end{equation*}
\end{corollary}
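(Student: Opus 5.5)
The corollary follows from Theorem~\ref{thm:levelII-unified}(a) applied with the Randers dictionary \eqref{eq:randers-dictionary}. Since $f$ is $h$-isoparametric with connected regular leaves, part (a) says the foliation is $F$-isoparametric if and only if both conditions in \eqref{eq:levelII-exact} hold on every regular leaf. So the proof reduces to two steps: show that the first condition is equivalent to $df(W)=\varphi(f)$, and show that the second condition is then automatic.

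\emph{First condition.} Here $\psi(q,r)=1+r$ by \eqref{eq:randers-psi}, and $r=df(W)/a(f)$ by \eqref{eq:randers-r-dfW}. On a connected leaf $a(f)$ is constant, so $d_{M_t}\psi=0$ if and only if $df(W)$ is constant on each leaf, as recorded in \eqref{eq:randers-first-obstruction}. The one point needing care is smoothness of $\varphi$. Because $f$ is a submersion on the regular saturated region and $df(W)$ is smooth and leafwise constant, it descends to a smooth function of $f$. To see this, restrict $df(W)$ to an $\bar n$-integral curve, along which $f$ is a local coordinate since $\bar n(f)=a(f)>0$.

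\emph{Second condition.} Since $\rho=1$, $\psi_r=1$ and $B=W$, we have $\mathcal C_{M_t}=\diver_{M_t}W^\top$. Formula \eqref{eq:randers-divW}, with $k\equiv k_0$, gives $\mathcal C_{M_t}=-2(n-1)k_0+rH_h$. Here $H_h$ is a function of $f$ by \eqref{eq:Hh-leaf-function}, since $f$ is $h$-isoparametric. Under $df(W)=\varphi(f)$ the quantity $r=\varphi(f)/a(f)$ is also a function of $f$. So \eqref{eq:randers-C-one-variable} shows $\mathcal C_{M_t}$ is leafwise constant. This proves the equivalence.

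\emph{Formulas in the conclusion.} The identity $F^*(df)=a(f)+\varphi(f)$ follows from \eqref{eq:randers-dual}, or equivalently from \eqref{eq:aF-reduced} with $\psi(f)=1+\varphi(f)/a(f)$. For the mean curvature, apply \eqref{eq:HF-reduced-levelII}: $H_F=(1+r)H_h-a\,r'(f)-\mathcal C_{M_t}$. Two inputs are needed:
\begin{itemize}
\item the geodesic property $\nabla^h_{\bar n}\bar n=0$ of $h$-transnormal functions, which gives $\bar n(r)=h(\nabla^h_{\bar n}W,\bar n)=-2k_0$ by \eqref{eq:randers-homothetic};
\item the relation $\bar n(r)=a(f)r'(f)$, which gives \eqref{eq:randers-rprime}.
\end{itemize}
Substituting, the $rH_h$ terms cancel and $H_F^{\mathrm{vol}}=H_h+2k_0+2(n-1)k_0=H_h+2nk_0$, in agreement with \eqref{eq:randers-H-volume}.

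The main obstacle is bookkeeping rather than substance. One must check that the sign conventions for $k$ in \eqref{eq:randers-homothetic} give $\bar n(r)=-2k_0$ and $\diver_M W=-2(n-1)k_0$ consistently, and that the smooth descent of $df(W)$ to $\varphi(f)$ is justified.
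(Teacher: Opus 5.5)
Your proposal is correct and follows the paper's own derivation essentially step for step: Theorem~\ref{thm:levelII-unified}(a) with the Randers dictionary, then the reduction $d_{M_t}\psi=0 \Leftrightarrow df(W)=\varphi(f)$ via $r=df(W)/a(f)$, the automatic leafwise constancy of $\mathcal C_{M_t}=-2(n-1)k_0+rH_h$, and the identity $a(f)r'(f)=-2k_0$ yielding $H_F^{\mathrm{vol}}=H_h+2nk_0$. Your extra argument for the smoothness of $\varphi$, restricting $df(W)$ to an integral curve of $\bar n$, fills in a point the paper only asserts.
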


This is precisely Theorem 3.9 of \cite{HeDongYin2020}.  In the conventions of
this paper the corresponding Laplacian can also be written explicitly as
\begin{equation}\label{eq:randers-DeltaF}
 \Delta_Ff
 =(a+\varphi)
 \left(\frac{b}{a}+\varphi'-2nk_0\right).
\end{equation}

\subsubsection*{Global same-foliation transfer}

When the defining function itself is global, no additional condition at the
critical set is needed beyond the standard $C^1$ regularity in the definition
of an isoparametric function.

\begin{theorem}[Global same-foliation transfer]\label{thm:global-levelII}
Let $f:N\to\mathbb R$ be a non-constant $C^1$ function, smooth on
$N_f=\{df\ne0\}$, whose connected regular levels form an
$\alpha$-isoparametric foliation.  On $N_f$ write
\begin{equation}\label{eq:global-alpha-iso}
 \alpha^*(df)=a_\alpha(f)>0,
 \qquad
 \Delta_\alpha f=b_\alpha(f).
\end{equation}
Then the same global function $f$ is $F$-isoparametric if and only if, on every
regular leaf $M_t=f^{-1}(t)$,
\begin{equation}\label{eq:global-levelII-exact}
 \boxed{d_{M_t}\psi(q,r)=0,
 \qquad
 d_{M_t}\mathcal C_{M_t}=0.}
\end{equation}
When these conditions hold, there are one-variable functions $\psi>0$ and
$C$ such that
\begin{equation}\label{eq:global-reduced-functions}
 \psi(q,r)=\psi(f),
 \qquad
 \mathcal C_{M_t}=C(f)
\end{equation}
on $N_f$, and
\begin{equation}\label{eq:global-aF}
 F^*(df)=a_F(f)=a_\alpha(f)\psi(f).
\end{equation}
Moreover,
\begin{equation}\label{eq:global-HF}
 H_F=\psi(f)H_\alpha-a_\alpha(f)\psi'(f)-C(f),
\end{equation}
and
\begin{equation}\label{eq:global-DeltaF}
 \Delta_Ff
 =\psi(f)^2b_\alpha(f)
 +2a_\alpha(f)^2\psi(f)\psi'(f)
 +a_\alpha(f)\psi(f)C(f).
\end{equation}
\end{theorem}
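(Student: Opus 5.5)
The plan is to reduce Theorem~\ref{thm:global-levelII} to the local criterion of Theorem~\ref{thm:levelII-unified}(a), applied on the regular set $N_f$. The point is that both $F$-isoparametricity and the conditions \eqref{eq:global-levelII-exact} are statements about the leaves of $N_f$ only. By the definition in Subsection~\ref{subsec:isoparametric-prelim}, the critical set is subject to no requirement beyond the standing $C^1$ regularity. So no extension argument across critical levels is needed; what remains is to pass from leafwise constancy to genuine one-variable functions of $f$ on $J=f(N_f)$.

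\emph{Necessity.} Assume $F^*(df)=a_F(f)$ and $\Delta_Ff=b_F(f)$ on $N_f$. Since $F^*(df)=\alpha^*(df)\psi(q,r)=a_\alpha(f)\psi(q,r)$ and $a_\alpha>0$, we get $\psi(q,r)=a_F(f)/a_\alpha(f)$, which is constant on each leaf. Next, \eqref{eq:H-from-lap} for $\alpha$ and for $F$ shows that $H_\alpha$ and $H_F$ are leaf functions. Because $\psi$ is a function of $f$ and $\bar n(f)=a_\alpha(f)$, we have $\bar n(\psi)=a_\alpha\psi'$. Proposition~\ref{prop:reduced-H} then isolates $\mathcal C_{M_t}=\psi H_\alpha-\bar n(\psi)-H_F$ as a leaf function. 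In particular $d_{M_t}\psi=0$ and $d_{M_t}\mathcal C_{M_t}=0$. Note that here $\psi$ is obtained globally as the function $a_F/a_\alpha$ of $f$, so no reparametrization argument is needed; this is simpler than in Theorem~\ref{thm:levelII-unified}.

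\emph{Sufficiency.} Assume \eqref{eq:global-levelII-exact}. Define $\psi(t)$ and $C(t)$ for $t\in J$ as the constant values on the leaf $f^{-1}(t)$. This is the one delicate point: a regular value $t$ could a priori have several connected components of $f^{-1}(t)\cap N_f$. The hypothesis that the connected regular levels form the foliation (``connected regular levels'') is read as saying that each $f^{-1}(t)$, $t\in J$, is connected, so the definitions are unambiguous. Smoothness of $\psi(t)$ and $C(t)$ follows by restricting to a local $\alpha$-normal geodesic $\tau\mapsto\exp(\tau\bar n)$, along which $f$ is a local diffeomorphism onto an interval ($\bar n(f)=a_\alpha>0$) and $\psi(q,r)$, $\mathcal C_{M_t}$ are smooth. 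This is the step I expect to need the most care, though it is routine. Then $F^*(df)=a_\alpha(f)\psi(f)=:a_F(f)$ is smooth, which gives transnormality and \eqref{eq:global-aF}.

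\emph{Remaining identities.} Proposition~\ref{prop:reduced-H} with $\bar n(\psi)=a_\alpha\psi'$ gives \eqref{eq:global-HF}. The right-hand side of \eqref{eq:global-HF} is a function of $f$, because $H_\alpha=a_\alpha'-b_\alpha/a_\alpha$. Finally, \eqref{eq:lap-H-relation} yields $\Delta_Ff=a_Fa_F'-a_FH_F$, and the computation closing the proof of Theorem~\ref{thm:levelII-unified} gives \eqref{eq:global-DeltaF}. Its right-hand side is a continuous function $b_F(f)$ on $J$, so $f$ is $F$-isoparametric. Equivalently, one may invoke Lemma~\ref{lem:transnormal-CMC} once transnormality is known.
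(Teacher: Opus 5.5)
Your proposal is correct and takes essentially the paper's route. Sufficiency is the same: leafwise constancy and connectedness give $\psi(f)$ and $C(f)$, hence transnormality, \eqref{eq:global-HF}, and the Laplacian formula. For necessity you read $\psi=a_F(f)/a_\alpha(f)$ off directly instead of citing Theorem~\ref{thm:levelII-unified}; this is a harmless simplification because the same $f$ is assumed $F$-isoparametric. Your remarks on well-definedness via connected levels and on the smoothness of $\psi(t)$ and $C(t)$ supply details the paper leaves implicit.
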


\begin{proof}
On each saturated regular region, Theorem~\ref{thm:levelII-unified} applies.
If $f$ is $F$-isoparametric, the necessity of
\eqref{eq:global-levelII-exact} follows from that theorem.  Conversely, if
\eqref{eq:global-levelII-exact} holds on every connected regular leaf, then
connectedness gives \eqref{eq:global-reduced-functions}.  Hence
\[
 F^*(df)=\alpha^*(df)\psi(f)=a_\alpha(f)\psi(f),
\]
so $f$ is $F$-transnormal on $N_f$.  The reduced mean-curvature formula gives
\eqref{eq:global-HF}; its right-hand side is constant on each regular leaf.
Thus every regular leaf is $F$-volume CMC and Lemma~\ref{lem:transnormal-CMC}
shows that $f$ is $F$-isoparametric.  Since the same function $f$ is already
$C^1$ on all of $N$ and smooth on $N_f$, this is precisely a global
isoparametric function in the sense of Section~\ref{subsec:isoparametric-prelim}.
Formula \eqref{eq:global-DeltaF} is the global form of
\eqref{eq:DeltaF-levelII}.
\end{proof}

\subsection{Useful sufficient structures}\label{subsec:sufficient}

The exact same-foliation theorem only requires the two combinations $\psi$ and
$\mathcal C_M$ to be leaf functions.  In applications it is often convenient
to impose stronger conditions that are easier to verify directly from the
induced-volume formula.

\begin{definition}[Induced-volume compatibility]\label{def:induced-compatible}
A regular cooriented foliation defined by $f$ is induced-volume compatible if
there exist one-variable functions $\varrho>0$, $L>0$, and $J$ such that
\begin{equation}\label{eq:induced-compatible}
 \rho(q)=\varrho(f),
 \qquad
 \Lambda(q,r)=L(f),
 \qquad
 \diver_{M_t}(\Lambda_rB^\top)=J(f).
\end{equation}
\end{definition}

\begin{definition}[Strong compatibility]\label{def:strong-compatible}
The foliation is strongly compatible if there exist one-variable functions
$Q$, $R$, and $K$ such that
\begin{equation}\label{eq:strong-compatible}
 q=Q(f),
 \qquad
 r=R(f),
 \qquad
 \psi_r(Q(f),R(f))\diver_{M_t}B^\top=K(f).
\end{equation}
\end{definition}

\begin{corollary}[Practical sufficient conditions for same-foliation transfer]\label{cor:practical}
The following statements hold.
\begin{enumerate}[label=\textup{(\roman*)},leftmargin=2.5em]
\item Every induced-volume compatible regular defining function satisfies
\begin{equation}\label{eq:induced-two-way}
 f\text{ is }F\text{-isoparametric}
 \Longleftrightarrow
 f\text{ is }\alpha\text{-isoparametric}.
\end{equation}
If $\alpha^*(df)=a(f)$, then
\begin{equation}\label{eq:induced-psi}
 \psi(f)=\frac{L(f)}{\varrho(f)},
\end{equation}
\begin{equation}\label{eq:induced-H}
 \boxed{\varrho H_F=LH_\alpha-aL'-J,}
\end{equation}
and, when $f$ is $\alpha$-isoparametric,
\begin{equation}\label{eq:induced-Delta}
 \boxed{
 \Delta_Ff
 =\frac{L^2b_\alpha+2a^2LL'+aLJ}{\varrho^2}
 -\frac{a^2L^2\varrho'}{\varrho^3}.}
\end{equation}

\item If in addition the induced hypersurface volumes agree exactly along the
foliation,
\begin{equation}\label{eq:exact-leaf-volume}
 dV_n^F=dV_{\bar n}^\alpha,
\end{equation}
then $L\equiv1$ and
\begin{equation}\label{eq:exact-specialization}
 F^*(df)=\frac{a}{\varrho},
 \qquad
 H_F=\frac{H_\alpha-J}{\varrho},
 \qquad
 \Delta_Ff=\frac{b_\alpha+aJ}{\varrho^2}
 -\frac{a^2\varrho'}{\varrho^3}.
\end{equation}
If every regular leaf is compact without boundary, then $J\equiv0$.

\item Every strongly compatible foliation is induced-volume compatible.  In
fact,
\begin{equation}\label{eq:strong-induced-data}
 \varrho(f)=\rho(Q(f)),
 \qquad
 L(f)=\varrho(f)\psi(Q(f),R(f)),
 \qquad
 J(f)=\varrho(f)K(f).
\end{equation}
Hence strong compatibility implies same-foliation transfer whenever either side is
isoparametric.
\end{enumerate}
\end{corollary}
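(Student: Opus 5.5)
The plan is to reduce all three parts of Corollary~\ref{cor:practical} to part (b) of Theorem~\ref{thm:levelII-unified} and to the general formula of Proposition~\ref{prop:general-H}. Nothing new has to be built; the work is bookkeeping between the two correction terms $\mathcal Q_M$ and $\mathcal C_M$.

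\textbf{Part (i).} From $\rho(q)=\varrho(f)$ and $\Lambda=L(f)$ I will read off $\psi(q,r)=\Lambda/\rho=L(f)/\varrho(f)$. This is a one-variable function of $f$ and is positive, which gives \eqref{eq:induced-psi}.

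For the mean curvature I will use \eqref{eq:general-H} rather than the reduced formula, because the hypotheses are stated in terms of $\Lambda$. I split $\mathcal Q_M$ as
\[
 \mathcal Q_M=\bigl[\Lambda_q\bar n(q)+\Lambda_r(\nabla^\alpha_{\bar n}\beta)(\bar n)\bigr]+\diver_M(\Lambda_rB^\top).
\]
The divergence term equals $J(f)$ by hypothesis. For the bracket, I extend $\bar n$ by the $\alpha$-transnormal field $\nabla^\alpha f/a(f)$, whose integral curves are $\alpha$-geodesics. Then $\bar n(r)=(\nabla^\alpha_{\bar n}\beta)(\bar n)$, so the bracket is exactly $\bar n(\Lambda(q,r))$. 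Since $\Lambda=L(f)$ and $\bar n(f)=a(f)$, the bracket equals $aL'$. Hence $\rho H_F=LH_\alpha-aL'-J$, which is \eqref{eq:induced-H}.

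The main obstacle is justifying this identification of the bracket. The directional derivative of $\Lambda$ along the geodesic extension of $\bar n$ must agree with the derivative of the leaf function $L(f)$. This needs the foliation to be $\alpha$-transnormal near the leaf, so I will work under $\alpha^*(df)=a(f)$, which is the standing hypothesis.

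With this, $H_F$ is constant on a leaf iff $H_\alpha$ is. Transnormality also transfers, since $F^*(df)=\psi(f)\alpha^*(df)$. So the two-way statement \eqref{eq:induced-two-way} follows from Lemma~\ref{lem:transnormal-CMC}, exactly as in part (b) of Theorem~\ref{thm:levelII-unified}.

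For the Laplacian I will use $\Delta_Ff=a_Fa_F'-a_FH_F$ with $a_F=aL/\varrho$ and $H_\alpha=a'-b_\alpha/a$. The terms in $a'$ cancel, and expanding the quotient derivative yields \eqref{eq:induced-Delta}.

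\textbf{Part (ii).} Exact equality of induced volumes means $\Lambda=1$ by \eqref{eq:induced-volume-comparison}, so $L\equiv1$ and $L'=0$. Substituting into the formulas of part (i) gives \eqref{eq:exact-specialization}.

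If a leaf is compact without boundary, I integrate $J(f)=\diver_{M_t}(\Lambda_rB^\top)$ over $M_t$ with respect to $dV^\alpha_{\bar n}$. The divergence theorem gives $J(t)\,\Vol(M_t)=0$, hence $J\equiv0$.

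\textbf{Part (iii).} If $q=Q(f)$ and $r=R(f)$, then $\rho$, $\psi$ and $\Lambda=\rho\psi$ are all functions of $f$, and so is $\Lambda_r=\rho(Q)\psi_r(Q,R)$, which is constant on each leaf. It can therefore be pulled out of the divergence:
\[
 \diver_{M_t}(\Lambda_rB^\top)=\varrho\,\psi_r\diver_{M_t}B^\top=\varrho K.
\]
This gives \eqref{eq:strong-induced-data}, and part (i) then yields the transfer.
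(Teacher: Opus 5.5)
Your proof is correct, but for the mean-curvature identity it takes a different route from the paper.

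\textbf{The paper's route.} It converts induced-volume compatibility into the hypotheses of Theorem~\ref{thm:levelII-unified}(b):
\begin{itemize}
\item It uses $\Lambda_r=\rho\psi_r$ and the leaf-constancy of $\rho$ to get $\diver_{M_t}(\psi_rB^\top)=J/\varrho$.
\item From $n(f)=F^*(df)=a\psi$ it computes $n(\log\rho)=a\psi\varrho'/\varrho$.
\item Hence $\mathcal C_{M_t}=J/\varrho+a\psi\varrho'/\varrho$ is a leaf function.
\end{itemize}
It then cites the theorem for the two-way transfer and substitutes $\psi=L/\varrho$ into \eqref{eq:HF-reduced-levelII} and \eqref{eq:DeltaF-levelII}. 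The $\varrho'$-terms coming from $\psi'$ and from $C$ cancel along the way.

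\textbf{Your route.} You stay with $\Lambda$ and Proposition~\ref{prop:general-H}. For the geodesic leaf-normal extension, the first two terms of $\mathcal Q_M$ are exactly $\bar n(\Lambda)=aL'$. So $\mathcal Q_M=aL'+J$, and \eqref{eq:induced-H} follows with no $\varrho'$ bookkeeping. You then reprove the two-way statement directly from Lemma~\ref{lem:transnormal-CMC}, and obtain \eqref{eq:induced-Delta} from $\Delta_Ff=a_Fa_F'-a_FH_F$.

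\textbf{What each buys.} Your argument is more economical. It also explains why $\varrho'$ is absent from \eqref{eq:induced-H}: the density variation is already contained in $\Lambda_q\bar n(q)$. The paper's argument instead makes explicit that induced-volume compatibility is a special case of the $(\psi,\mathcal C)$ criterion, at the cost of computing a $\mathcal C$ whose $\varrho'$-part later cancels.

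Both rely on $\alpha$-transnormality at the corresponding step. You use it to kill $\beta(\nabla^\alpha_{\bar n}\bar n)$ in $\bar n(r)$. The paper uses it inside Proposition~\ref{prop:reduced-H} and in $\bar n(\psi)=a\psi'$.

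Your treatment of parts (ii) and (iii) coincides with the paper's.
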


\begin{proof}
Under \eqref{eq:induced-compatible}, the first two equalities give
$\psi=L/\varrho$.  Since $\Lambda_r=\rho\psi_r$,
\begin{equation*}
 \diver_{M_t}(\psi_rB^\top)=\frac{J}{\varrho}.
\end{equation*}
Moreover the $F$-unit normal satisfies
\begin{equation*}
 n(f)=F^*(df)=a\psi,
\end{equation*}
so
\begin{equation*}
 n(\log\rho)=a\psi\frac{\varrho'}{\varrho}.
\end{equation*}
Therefore
\begin{equation}\label{eq:C-induced-proof}
 \mathcal C_{M_t}=\frac{J}{\varrho}
 +a\psi\frac{\varrho'}{\varrho},
\end{equation}
which is a function of $f$.  Theorem~\ref{thm:levelII-unified} gives the two-way
transfer.  Substituting $\psi=L/\varrho$ into
\eqref{eq:HF-reduced-levelII} and \eqref{eq:DeltaF-levelII} yields
\eqref{eq:induced-H} and \eqref{eq:induced-Delta}.

Under exact equality \eqref{eq:exact-leaf-volume}, the factor
$dV_n^F=L(f)dV_{\bar n}^\alpha$ gives $L=1$, which immediately produces
\eqref{eq:exact-specialization}.  If each leaf is compact without boundary,
$J(t)$ is constant on the leaf and
\begin{equation*}
 J(t)\Vol_\alpha(M_t)
 =\int_{M_t}\diver_{M_t}(\Lambda_rB^\top)dV_{\bar n}^\alpha=0,
\end{equation*}
so $J=0$.

Finally, under strong compatibility, $\rho=\rho(Q(f))$ and
$\Lambda=\rho(Q(f))\psi(Q(f),R(f))$ are leaf functions.  Because $\Lambda_r=\rho\psi_r$
and $\rho$ is constant on each leaf,
\begin{equation*}
 \diver_{M_t}(\Lambda_rB^\top)
 =\rho(Q(f))\psi_r(Q(f),R(f))\diver_{M_t}B^\top
 =\varrho(f)K(f),
\end{equation*}
which proves \eqref{eq:strong-induced-data}.
\end{proof}

\section{A complete non-Randers global example}\label{sec:example}

Throughout, $\alpha=|y|$ is the Euclidean metric on $\R^n$, $n\ge2$, and the ambient Finsler measure is the Busemann--Hausdorff volume.

\subsection{A bounded radial one-form}

Fix $c>0$ and define
\begin{equation}\label{eq:h-function}
 h(z)=
 \begin{cases}
 1,&z\le0,\\[2mm]
 \exp\!\bigl(-ze^{-1/z}\bigr),&z>0.
 \end{cases}
\end{equation}
All derivatives of $ze^{-1/z}$ tend to zero as $z\downarrow0$, so $h$ is smooth.  For $t\ge0$ put
\begin{equation}\label{eq:qstar}
 q_*(t)=c^2\int_0^t h(16c^2\tau-1)\,\dd\tau,
 \qquad
 k(t)=\sqrt{\frac{q_*(t)}{t}}\quad(t>0),
 \qquad
 k(0)=c.
\end{equation}
Then $q_*'(t)>0$ and
\begin{equation}\label{eq:qstar-euclidean-region}
 q_*(t)=c^2t
 \qquad
 \text{for }0\le t\le\frac1{16c^2}.
\end{equation}
Moreover, for $z\ge1$ one has $e^{-1/z}\ge e^{-1}$ and hence
\[
 h(z)=\exp\!\bigl(-ze^{-1/z}\bigr)\le e^{-z/e}.
\]
Therefore
\[
 \int_0^\infty h(z)\,\dd z
 \le 1+\int_1^\infty e^{-z/e}\,\dd z
 =1+e^{1-1/e}<3,
\]
and consequently
\begin{equation}\label{eq:qstar-bound}
 q_*(t)<q_\infty
 :=\frac1{16}\left(1+\int_0^\infty h(z)\,\dd z\right)
 \le\frac{2+e^{1-1/e}}{16}<\frac14.
\end{equation}

Set
\begin{equation}\label{eq:beta-global}
 f(x)=|x|^2,
 \qquad
 \beta(x,y)=k(f)\langle x,y\rangle.
\end{equation}
Then
\begin{equation}\label{eq:q-global}
 q=\|\beta\|_\alpha^2=k(f)^2|x|^2=q_*(f)<\frac14.
\end{equation}
For every outward centered sphere $M_t=\{|x|^2=t\}$, $t>0$,
\begin{equation}\label{eq:radial-data}
 \nbar=\frac{x}{|x|},
 \qquad
 r=\beta(\nbar)=\sqrt q,
 \qquad
 B^\top=0.
\end{equation}
Thus $q$ and $r$ are functions of $f$ and the tangential $B^\top$-term vanishes identically on the centered-sphere foliation.

\subsection{A smooth non-Randers profile}

Set
\begin{equation}\label{eq:q0q1}
 q_0=\frac1{256},\qquad q_1=\frac1{64},
\end{equation}
and choose a smooth nondecreasing cutoff $\chi$ such that
\begin{equation}\label{eq:chi}
 0\le\chi\le1,
 \qquad
 \chi(q)=0\ (q\le q_0),
 \qquad
 \chi(q)=1\ (q\ge q_1).
\end{equation}
For instance, if $\eta(t)=0$ for $t\le0$ and $\eta(t)=e^{-1/t}$ for $t>0$, one may take
\[
 \chi(q)=\frac{\eta(q-q_0)}{\eta(q-q_0)+\eta(q_1-q)}.
\]
Define
\begin{equation}\label{eq:P0}
 P_0(q,s)=1+2\chi(q)s+s^2.
\end{equation}
Since $|s|\le\sqrt q<1/2$,
\begin{align}
 P_0&\ge(1-|s|)^2>0,\label{eq:P0-pos}\\
 P_0-s(P_0)_s&=1-s^2>0,\label{eq:P0-P}\\
 P_0-s(P_0)_s+(q-s^2)(P_0)_{ss}
 &=1+2q-3s^2\ge1-q>0.\label{eq:P0-D}
\end{align}
Hence $F_0=\alpha P_0(q,s)$ is a regular generalized $(\alpha,\beta)$-metric on the entire realized parameter range.

Let its BH density ratio be
\begin{equation}\label{eq:rho-seed}
 \rho_s(q)=
 \frac{I_n}{\displaystyle\int_0^\pi
 [1+2\chi(q)\sqrt q\cos\theta+q\cos^2\theta]^{-n}
 \sin^{n-2}\theta\,\dd\theta}.
\end{equation}
Near $q=0$, $\chi=0$, so the integrand is $(1+q\cos^2\theta)^{-n}$; hence $\rho_s$ is smooth at $0$ and $\rho_s(0)=1$.

Along the outward radial direction $s=\sqrt q$, define
\begin{equation}\label{eq:Tq}
 T(q)=1+2\chi(q)\sqrt q+q.
\end{equation}
Although $\sqrt q$ occurs in this expression, $\chi$ vanishes on a neighborhood of zero, so $T$ is smooth at $q=0$.  Put
\begin{equation}\label{eq:Astar}
 A_*(q)=\left(\frac{T(q)}{\rho_s(q)}\right)^{1/(n-1)},
 \qquad
 \Phi_*(q,s)=A_*(q)P_0(q,s).
\end{equation}
Then $A_*$ and $\Phi_*$ are smooth at $q=0$.

\subsection{A complete non-Randers realization}

\begin{theorem}[Complete non-Randers realization of both transfer problems]\label{thm:global-example}
Define
\begin{equation}\label{eq:Fexample}
 F(x,y)=\alpha(x,y)\Phi_*(q,s),
 \qquad
 q=q_*(f),
 \qquad
 s=\frac{\beta(x,y)}{\alpha(x,y)}.
\end{equation}
Then the following hold.

\begin{enumerate}[label=(\roman*)]
\item $F$ is smooth on $T\R^n\setminus0$, strongly convex, and forward and backward complete.

\item $F$ is non-Riemannian at every $x\neq0$, is not a Randers metric, and is not locally Minkowski.

\item Every outward centered sphere satisfies the exact induced-volume identity
\begin{equation}\label{eq:spherical-exact}
 dV_n^F=dV_{\nbar}^\alpha.
\end{equation}

\item Let
\begin{equation}\label{eq:rhob}
 \rho_b(t)=\rho_*(q_*(t)),
 \qquad
 \rho_*(q)=A_*(q)^n\rho_s(q).
\end{equation}
On the regular set $\R^n\setminus\{0\}$,
\begin{align}
 \alpha^*(df)&=a_\alpha(f)=2\sqrt f,\label{eq:aalpha}\\
 F^*(df)&=a_F(f)=\frac{2\sqrt f}{\rho_b(f)},\label{eq:aF}\\
 \nabla^F f&=\frac{2x}{\rho_b(f)^2},\label{eq:example-grad}\\
 \Delta_F f&=\frac{2n}{\rho_b(f)^2}
 -\frac{4f\rho_b'(f)}{\rho_b(f)^3},\label{eq:example-lap}\\
 H_F&=-\frac{n-1}{\sqrt f\,\rho_b(f)}
 \quad\text{on }M_f=\{|x|=\sqrt f\}.\label{eq:example-H}
\end{align}
Moreover,
\begin{equation}\label{eq:psi-C-explicit}
 \psi(f)=\frac1{\rho_b(f)},
 \qquad
 C(f)=\mathcal C_{M_f}
 =\frac{2\sqrt f\,\rho_b'(f)}{\rho_b(f)^2}.
\end{equation}
Hence the centered-sphere foliation satisfies the same-foliation transfer
conditions.

\item The function $f(x)=|x|^2$ is smooth on all of $\R^n$ and is globally
isoparametric for both $\alpha$ and the BH-$F$ structure.  Thus the centered
spheres realize global same-foliation transfer.  For every $t_0>0$, the same
sphere $M_{t_0}$ also realizes global same-hypersurface transfer by taking the
global $F$-transnormal defining function to be $g=f$.
\end{enumerate}
\end{theorem}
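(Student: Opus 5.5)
The plan is to exploit the fact that $F$ is a pointwise conformal rescaling of the seed metric. Since $A_*$ depends only on $q=q_*(f)$, we have $F=A_*(q)F_0$ with $F_0=\alpha P_0(q,s)$. By \eqref{eq:rho-scale}, $\rho=A_*^n\rho_s=\rho_*(q)$. The dual profile also scales: $\psi_F=\psi_0/A_*$ and $(\psi_F)_r=(\psi_0)_r/A_*$. On the outward sphere we have $r=\sqrt q$ and $B^\top=0$ by \eqref{eq:radial-data}. Lemma~\ref{lem:primal-dual} evaluated at $s=\sqrt q$ gives $R(q,\sqrt q)=\sqrt q$ and $E(q,\sqrt q)=P_0(q,\sqrt q)=T(q)$, because the factor $q-s^2$ vanishes there. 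Hence $\psi_0(q,\sqrt q)=1/T$. From the definition \eqref{eq:Astar}, $A_*^{n-1}\rho_s=T$, so the key identities are
\[
 \Lambda=\rho_*\psi_F=\frac{A_*^{n-1}\rho_s}{T}=1,
 \qquad
 \psi=\frac1{A_*T}=\frac1{A_*^n\rho_s}=\frac1{\rho_b(f)} .
\]
Lemma~\ref{lem:normal-volume} then yields (iii) directly.

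For (iv), the first step is $\alpha^*(df)=|2x|=2\sqrt f$. Next, $F^*(df)=\alpha^*(df)\psi$ gives \eqref{eq:aF}. Since $B^\top=0$, \eqref{eq:normal-comparison} gives $n=\psi\bar n$, and therefore $\nabla^Ff=a_Fn=2x/\rho_b^2$. I will compute $\Delta_Ff=\diver_\alpha(\nabla^Ff)+\nabla^Ff(\log\rho_b(f))$ using $\nabla\rho_b(f)=2\rho_b'x$. The Euclidean divergence contributes $2n/\rho_b^2-8f\rho_b'/\rho_b^3$, and the density term contributes $+4f\rho_b'/\rho_b^3$, which gives \eqref{eq:example-lap}. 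The foliation is strongly compatible: $q=q_*(f)$, $r=\sqrt{q_*(f)}$, and $K=0$. Corollary~\ref{cor:practical}(ii) then applies with $L\equiv1$, $J=0$ and $\varrho=\rho_b$, so $H_F=H_\alpha/\rho_b$ with $H_\alpha=-(n-1)/\sqrt f$. The value of $C$ follows from \eqref{eq:C-induced-proof}, namely $C=a\psi\rho_b'/\rho_b$. As a consistency check, I will verify $\Delta_Ff=a_Fa_F'-a_FH_F$.

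For (i), smoothness follows because $q_*(t)/t$ is smooth and positive on $[0,\infty)$, so $k$ and $\beta$ are smooth. The functions $A_*$ and $\Phi_*$ are smooth, with $\chi$ removing the $\sqrt q$ singularity near $q=0$. Strong convexity holds because multiplying by the base function $A_*>0$ preserves the inequalities \eqref{eq:P0-pos}--\eqref{eq:P0-D}. For completeness, $q$ ranges in the compact interval $[0,q_\infty]$, so $A_*\ge a_0>0$, and $P_0\ge(1-\tfrac12)^2$. Hence $F(y)$ and $F(-y)$ are both at least $\tfrac{a_0}{4}|y|$. Forward and backward metric balls are therefore Euclidean-bounded and hence precompact, and the Finsler Hopf--Rinow theorem gives both kinds of completeness.

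For (v), $f=|x|^2$ is smooth with critical set $\{0\}$. It is $\alpha$-isoparametric with $a=2\sqrt t$ and $b=2n$. It is $F$-isoparametric with smooth $a_F(t)=2\sqrt t/\rho_b(t)$ on $(0,\infty)$ and continuous $b_F$ read off from \eqref{eq:example-lap}. Theorem~\ref{thm:global-levelII} covers same-foliation transfer, and Theorem~\ref{thm:global-levelI} with $g=f$ covers same-hypersurface transfer.

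The main obstacle is (ii). Non-Riemannianity at $x\ne0$ (where $q>0$) is clear because the indicatrix $\{\alpha P_0=\mathrm{const}\}$ is not a quadric: $(\alpha^2+2\chi\alpha\beta+\beta^2)/\alpha$ is not the square root of a quadratic form when $\beta\ne0$. The same argument shows it is not the translate of an ellipsoid, so $F$ is not Randers. For non-local-Minkowskiness, I would use a linear invariant of the tangent norm $F_x$, such as the reversibility $\lambda(x)=\max_yF(y)/F(-y)$ or, in the reversible region where $\chi=0$, the Mahler volume of the unit ball. For a locally Minkowski metric such an invariant must be locally constant. Here it is a nonconstant function of $q$: for example, $\lambda=(1+\sqrt q)^2/(1-\sqrt q)^2$ where $\chi=1$. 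Because $q_*$ is strictly increasing, $q$ varies near every point. Checking strict monotonicity of the chosen invariant in $q$ on each region, including the transition region, is the step I expect to need the most care.
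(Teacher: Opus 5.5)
Your proposal is correct and follows essentially the paper's route. The shared steps are the scaling $F=A_*F_0$ with the endpoint primal--dual conversion giving $\Lambda=1$ and $\psi=1/\rho_b$, the radial data $q=q_*(f)$, $r=\sqrt{q_*(f)}$, $B^\top=0$ fed into the reduced formulas, Euclidean comparison plus Hopf--Rinow for completeness, and the reversibility invariant on the region where $\chi=1$. Computing $\Delta_Ff$ directly by divergence and obtaining $H_F$ from Corollary~\ref{cor:practical}(ii) are only cosmetic variants.

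The step you flag as the main obstacle is not needed. To show $F$ is not locally Minkowski, the invariant $\bigl((1+\sqrt q)/(1-\sqrt q)\bigr)^2$ only has to vary on one open set. The annulus $\frac1{8c}<|x|<\frac1{4c}$, where $q=c^2|x|^2$ and $\chi=1$, already supplies this, so the transition region never has to be examined.
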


\begin{proof}[Proof of Theorem~\ref{thm:global-example}]
We divide the proof into steps in order to record the construction and the global verification explicitly.

\medskip
\noindent\textbf{Step 1: regularity, strong convexity and completeness.}
The inequalities \eqref{eq:P0-pos}--\eqref{eq:P0-D} prove the standard strong-convexity conditions for the seed profile.  Multiplication by the positive base-point factor $A_*(q)$ does not change fiberwise strong convexity.  Smoothness at the origin follows from \eqref{eq:qstar-euclidean-region}: near $f=0$ one has $q_*(f)=c^2f$, while $\chi$ vanishes on a whole neighborhood of $q=0$, so the potentially odd term involving $\sqrt q$ is absent there.

Because $0\le q\le q_\infty<1/4$, all factors in the profile are uniformly bounded above and below by positive constants.  Hence there exist $m,M>0$ such that
\begin{equation}\label{eq:uniform}
 m|y|\le F(x,y)\le M|y|
\end{equation}
for all $(x,y)$, and the same estimate holds with $y$ replaced by $-y$.  Thus the forward and backward Finsler distances are uniformly comparable with Euclidean distance.  Both metric spaces are complete, and the Finsler Hopf--Rinow theorem gives forward and backward geodesic completeness \cite{BaoChernShen}.

\medskip
\noindent\textbf{Step 2: non-Riemannian, non-Randers and non-locally-Minkowski character.}
If $\chi(q)>0$, the term $2\chi(q)s$ makes the norm nonreversible and hence non-Riemannian.  If $\chi(q)=0$ and $x\ne0$, then
\begin{equation}\label{eq:reversible-nonriem}
 A_*^{-2}F_x^2
 =\alpha^2+2\beta^2+\frac{\beta^4}{\alpha^2},
\end{equation}
which is not a quadratic form when $\beta_x\ne0$ and $n\ge2$.  Hence the metric is non-Riemannian there as well.  On this region the norm is reversible.  A reversible Randers norm is Riemannian, so \eqref{eq:reversible-nonriem} also rules out the Randers class.

On the annulus
\begin{equation}\label{eq:annulus-example}
 \frac1{8c}\le |x|\le\frac1{4c},
\end{equation}
one has $q=c^2|x|^2\in[1/64,1/16]$, $k=c$ and $\chi=1$.  There the profile agrees with a normalized square metric, whose pointwise reversibility invariant is
\begin{equation}\label{eq:rev-invariant}
 \mathfrak r_F(x)
 :=\max_{y\ne0}\frac{F(x,-y)}{F(x,y)}
 =\left(\frac{1+\sqrt q}{1-\sqrt q}\right)^2.
\end{equation}
This varies strictly with $|x|$.  A locally Minkowski metric has base-point-independent norm in suitable local coordinates, hence this coordinate-invariant quantity would be locally constant.  Therefore $F$ is not locally Minkowski on the interior of the annulus.

\medskip
\noindent\textbf{Step 3: exact induced hypersurface volume on centered spheres.}
The BH density of $F$ is
\begin{equation}\label{eq:rho-star}
 \rho_*(q)=A_*(q)^n\rho_s(q).
\end{equation}
Along an outward centered sphere, \eqref{eq:radial-data} gives $r=\sqrt q$.  The primal--dual endpoint conversion of Lemma~\ref{lem:primal-dual} gives
\begin{equation}\label{eq:psi-endpoint}
 \psi(q,\sqrt q)=\frac1{A_*(q)T(q)}.
\end{equation}
Consequently
\begin{equation}\label{eq:Lambda-one}
 \Lambda(q,\sqrt q)
 =\rho_*(q)\psi(q,\sqrt q)
 =\frac{A_*(q)^{n-1}\rho_s(q)}{T(q)}=1.
\end{equation}
The general induced-volume identity $dV_n^F=\Lambda dV_{\nbar}^\alpha$ therefore gives \eqref{eq:spherical-exact}.  This equality is imposed only in the outward radial normal direction, not for every possible normal direction, so it does not trigger all-normal rigidity.

\medskip
\noindent\textbf{Step 4: explicit reduced-compatibility data.}
For $f=|x|^2$,
\begin{equation}\label{eq:f-data}
 \alpha^*(df)=|\nabla^\alpha f|=2\sqrt f,
 \qquad
 \Delta_\alpha f=2n,
 \qquad
 q=q_*(f),
 \qquad
 r=\sqrt{q_*(f)},
 \qquad
 B^\top=0.
\end{equation}
Hence the centered-sphere foliation is strongly compatible and therefore satisfies the same-foliation compatibility conditions.

On the centered-sphere family, $\rho=\rho_b(f)$.  By \eqref{eq:Lambda-one},
\[
 1=\Lambda=\rho\psi=\rho_b(f)\psi(f),
\]
so
\begin{equation}\label{eq:psi-explicit}
 \boxed{\;\psi(f)=\frac1{\rho_b(f)}.\;}
\end{equation}
The same-foliation transnormal relation \eqref{eq:aF-reduced} yields
\begin{equation}\label{eq:aF-derived}
 a_F(f)=a_\alpha(f)\psi(f)
 =\frac{2\sqrt f}{\rho_b(f)},
\end{equation}
which is \eqref{eq:aF}.  Squaring gives
\begin{equation}\label{eq:aF-square}
 F^*(df)^2=\frac{4f}{\rho_b(f)^2}.
\end{equation}

The Finsler gradient formula is
\begin{equation}\label{eq:grad-derived}
 \nabla^Ff=\frac{2x}{\rho_b(f)^2}.
\end{equation}
Hence the outward $F$-unit normal selected by $df$ is
\begin{equation}\label{eq:nF}
 n
 =\frac{\nabla^Ff}{a_F(f)}
 =\frac{x}{\sqrt f\,\rho_b(f)}.
\end{equation}
Since $\rho=\rho_b(f)$ and $B^\top=0$,
\[
 \mathcal C_{M_f}
 =n(\log\rho_b(f)).
\]
Furthermore
\begin{equation}\label{eq:nf}
 n(f)=df(n)=a_F(f)=\frac{2\sqrt f}{\rho_b(f)}.
\end{equation}
Therefore
\begin{equation}\label{eq:C-explicit}
 \boxed{\;
 C(f)=\mathcal C_{M_f}
 =\frac{2\sqrt f\,\rho_b'(f)}{\rho_b(f)^2}.
 \;}
\end{equation}
This proves the explicit same-foliation compatibility \eqref{eq:psi-C-explicit} on the entire regular set.

\medskip
\noindent\textbf{Step 5: the mean-curvature and Laplacian formulas.}
The Euclidean outward unit normal is $\nbar=x/|x|$, so with the sign convention $H_\alpha=-\diver_\alpha\nbar$,
\begin{equation}\label{eq:Halpha}
 H_\alpha=-\frac{n-1}{\sqrt f}.
\end{equation}
Using the reduced mean-curvature relation \eqref{eq:HF-reduced-levelII}, together with
\[
 \psi=\rho_b^{-1},
 \qquad
 \psi'=-\frac{\rho_b'}{\rho_b^2},
 \qquad
 a_\alpha=2\sqrt f,
 \qquad
 C=\frac{2\sqrt f\rho_b'}{\rho_b^2},
\]
we obtain
\begin{align*}
 H_F
 &=\frac1{\rho_b}H_\alpha
 -2\sqrt f\left(-\frac{\rho_b'}{\rho_b^2}\right)
 -\frac{2\sqrt f\rho_b'}{\rho_b^2}\\
 &=\frac{H_\alpha}{\rho_b}
 =-\frac{n-1}{\sqrt f\,\rho_b(f)},
\end{align*}
which proves \eqref{eq:example-H}.  Notice that the two correction terms cancel exactly because the induced-area normalization forces $\rho\psi=1$ on this family.

Likewise, substituting
\[
 b_\alpha=2n,
 \qquad
 a_\alpha=2\sqrt f,
 \qquad
 \psi=\rho_b^{-1},
 \qquad
 C=\frac{2\sqrt f\rho_b'}{\rho_b^2}
\]
into \eqref{eq:DeltaF-levelII} gives
\begin{align*}
 \Delta_F f
 &=\frac{2n}{\rho_b^2}
 +2(4f)\frac1{\rho_b}
 \left(-\frac{\rho_b'}{\rho_b^2}\right)
 +\frac{2\sqrt f}{\rho_b}
 \frac{2\sqrt f\rho_b'}{\rho_b^2}\\
 &=\frac{2n}{\rho_b^2}
 -\frac{8f\rho_b'}{\rho_b^3}
 +\frac{4f\rho_b'}{\rho_b^3}\\
 &=\frac{2n}{\rho_b^2}
 -\frac{4f\rho_b'}{\rho_b^3},
\end{align*}
which is \eqref{eq:example-lap}.  Thus both $a_F$ and $\Delta_F f$ depend only on $f$ throughout the regular set, so $f$ is BH-$F$-isoparametric there.

\medskip
\noindent\textbf{Step 6: global isoparametric realization.}
The function
\[
 f(x)=|x|^2
\]
is smooth on all of $\R^n$; in particular it is $C^1$ at its unique critical
point $x=0$ and smooth on the regular set $\R^n\setminus\{0\}$.  On that
regular set,
\[
 \alpha^*(df)=2\sqrt f,
 \qquad
 \Delta_\alpha f=2n,
\]
so $f$ is globally $\alpha$-isoparametric.  By Steps~4 and~5,
\[
 F^*(df)=\frac{2\sqrt f}{\rho_b(f)},
 \qquad
 \Delta_Ff=
 \frac{2n}{\rho_b(f)^2}
 -\frac{4f\rho_b'(f)}{\rho_b(f)^3},
\]
and both right-hand sides depend only on $f$.  Therefore $f$ is a global
BH-$F$-isoparametric function in the sense of
Section~\ref{subsec:isoparametric-prelim}.  The same global function hence
realizes same-foliation transfer.

For any $t_0>0$, the centered sphere
\[
 M_{t_0}=\{|x|^2=t_0\}
\]
is an $\alpha$-isoparametric hypersurface and a regular level of the global
$F$-isoparametric function $g=f$.  Thus it also realizes global
same-hypersurface transfer.  Equivalently, on each regular sphere
\[
 H_F(t)=-\frac{n-1}{\sqrt t\,\rho_b(t)},
\]
which is constant along the leaf.
\end{proof}

\end{document}